\documentclass[12pt]{amsart}
\usepackage{amsmath,amssymb,amsbsy,amsfonts,latexsym,amsopn,amstext,cite,
                                               amsxtra,euscript,amscd,bm,mathabx}
\usepackage{url}
\usepackage[colorlinks,linkcolor=blue,anchorcolor=blue,citecolor=blue,backref=page]{hyperref}
\usepackage{color}
\usepackage{graphics,epsfig}
\usepackage{graphicx}
\usepackage{float} 
\usepackage[english]{babel}
\usepackage{mathtools}
\usepackage{todonotes}
\usepackage{url}
\usepackage[colorlinks,linkcolor=blue,anchorcolor=blue,citecolor=blue,backref=page]{hyperref}
\DeclareMathAlphabet{\mathmybb}{U}{bbold}{m}{n}

\usepackage[norefs,nocites]{refcheck}
\hypersetup{breaklinks=true}

\usepackage[norefs,nocites]{refcheck}
\usepackage[english]{babel}
\begin{document}

\newtheorem{thm}{Theorem}
\newtheorem{lem}[thm]{Lemma}
\newtheorem{claim}[thm]{Claim}
\newtheorem{cor}[thm]{Corollary}
\newtheorem{prop}[thm]{Proposition} 
\newtheorem{definition}[thm]{Definition}
\newtheorem{rem}[thm]{Remark} 
\newtheorem{question}[thm]{Open Question}
\newtheorem{conj}[thm]{Conjecture}
\newtheorem{prob}{Problem}
\newtheorem{Process}[thm]{Process}
\newtheorem{Computation}[thm]{Computation}
\newtheorem{Fact}[thm]{Fact}
\newtheorem{Observation}[thm]{Observation}

\newtheorem{lemma}[thm]{Lemma}

\newcommand{\GL}{\operatorname{GL}}
\newcommand{\SL}{\operatorname{SL}}
\newcommand{\lcm}{\operatorname{lcm}}
\newcommand{\ord}{\operatorname{ord}}
\newcommand{\Op}{\operatorname{Op}}
\newcommand{\Tr}{\operatorname{Tr}}
\newcommand{\Nm}{\operatorname{Nm}}
\newcommand{\BigSquare}[1]{\raisebox{-0.5ex}{\scalebox{2}{$\square$}}_{#1}}
\newcommand{\BigDiamond}[1]{\raisebox{-0.5ex}{\scalebox{2}{$\diamond$}}_{#1}}
\numberwithin{equation}{section}
\numberwithin{thm}{section}
\numberwithin{table}{section}

\numberwithin{figure}{section}

\def\sssum{\mathop{\sum\!\sum\!\sum}}
\def\ssum{\mathop{\sum\ldots \sum}}
\def\iint{\mathop{\int\ldots \int}}

\def\wt {\mathrm{wt}}
\def\Tr {\mathrm{Tr}}

\def\SrA{\cS_r\(\cA\)}

\def\vol {{\mathrm{vol\,}}}
\def\squareforqed{\hbox{\rlap{$\sqcap$}$\sqcup$}}
\def\qed{\ifmmode\squareforqed\else{\unskip\nobreak\hfil
\penalty50\hskip1em\null\nobreak\hfil\squareforqed
\parfillskip=0pt\finalhyphendemerits=0\endgraf}\fi}

\def \ss{\mathsf{s}} 

\def \balpha{\bm{\alpha}}
\def \bbeta{\bm{\beta}}
\def \bgamma{\bm{\gamma}}
\def \blambda{\bm{\lambda}}
\def \bchi{\bm{\chi}}
\def \bphi{\bm{\varphi}}
\def \bpsi{\bm{\psi}}
\def \bomega{\bm{\omega}}
\def \btheta{\bm{\vartheta}}

\newcommand{\bfxi}{{\boldsymbol{\xi}}}
\newcommand{\bfrho}{{\boldsymbol{\rho}}}

 \def \xbar{\overline x}
  \def \ybar{\overline y}

\def\cA{{\mathcal A}}
\def\cB{{\mathcal B}}
\def\cC{{\mathcal C}}
\def\cD{{\mathcal D}}
\def\cE{{\mathcal E}}
\def\cF{{\mathcal F}}
\def\cG{{\mathcal G}}
\def\cH{{\mathcal H}}
\def\cI{{\mathcal I}}
\def\cJ{{\mathcal J}}
\def\cK{{\mathcal K}}
\def\cL{{\mathcal L}}
\def\cM{{\mathcal M}}
\def\cN{{\mathcal N}}
\def\cO{{\mathcal O}}
\def\cP{{\mathcal P}}
\def\cQ{{\mathcal Q}}
\def\cR{{\mathcal R}}
\def\cS{{\mathcal S}}
\def\cT{{\mathcal T}}
\def\cU{{\mathcal U}}
\def\cV{{\mathcal V}}
\def\cW{{\mathcal W}}
\def\cX{{\mathcal X}}
\def\cY{{\mathcal Y}}
\def\cZ{{\mathcal Z}}
\def\Ker{{\mathrm{Ker}}}

\def\NmQR{N(m;Q,R)}
\def\VmQR{\cV(m;Q,R)}

\def\Xm{\cX_{p,m}}

\def \A {{\mathbb A}}
\def \B {{\mathbb A}}
\def \C {{\mathbb C}}
\def \F {{\mathbb F}}
\def \G {{\mathbb G}}
\def \L {{\mathbb L}}
\def \K {{\mathbb K}}
\def \N {{\mathbb N}}
\def \PP {{\mathbb P}}
\def \Q {{\mathbb Q}}
\def \R {{\mathbb R}}
\def \Z {{\mathbb Z}}
\def \fS{\mathfrak S}
\def \fB{\mathfrak B}

\def\Fq{\F_q}
\def\Fqr{\F_{q^r}} 
\def\ovFq{\overline{\F_q}}
\def\ovFp{\overline{\F_p}}
\def\GL{\operatorname{GL}}
\def\SL{\operatorname{SL}}
\def\PGL{\operatorname{PGL}}
\def\PSL{\operatorname{PSL}}
\def\li{\operatorname{li}}
\def\sym{\operatorname{sym}}

\def\Mob{M{\"o}bius }

\def\fF{\EuScript{F}}
\def\M{\mathsf {M}}
\def\T{\mathsf {T}}

\def\e{{\mathbf{\,e}}}
\def\ep{{\mathbf{\,e}}_p}
\def\eq{{\mathbf{\,e}}_q}

\def\\{\cr}
\def\({\left(}
\def\){\right)}

\def\<{\left(\!\!\left(}
\def\>{\right)\!\!\right)}
\def\fl#1{\left\lfloor#1\right\rfloor}
\def\rf#1{\left\lceil#1\right\rceil}

\def\Tr{{\mathrm{Tr}}}
\def\Nm{{\mathrm{Nm}}}
\def\Im{{\mathrm{Im}}}

\def \oF {\overline \F}

\newcommand{\pfrac}[2]{{\left(\frac{#1}{#2}\right)}}
\newcommand\numberthis{\addtocounter{equation}{1}\tag{\theequation}}

\def \Prob{{\mathrm {}}}
\def\e{\mathbf{e}}
\def\ep{{\mathbf{\,e}}_p}
\def\epp{{\mathbf{\,e}}_{p^2}}
\def\em{{\mathbf{\,e}}_m}

\def\Res{\mathrm{Res}}
\def\Orb{\mathrm{Orb}}

\def\vec#1{\mathbf{#1}}
\def \va{\vec{a}}
\def \vb{\vec{b}}
\def \vh{\vec{h}}
\def \vk{\vec{k}}
\def \vs{\vec{s}}
\def \vu{\vec{u}}
\def \vv{\vec{v}}
\def \vz{\vec{z}}
\def\flp#1{{\left\langle#1\right\rangle}_p}
\def\T {\mathsf {T}}

\def\sfG {\mathsf {G}}
\def\sfK {\mathsf {K}}

\def\mand{\qquad\mbox{and}\qquad}

\title[Gaps between quadratic forms]
{Gaps between quadratic forms}

\author[Siddharth Iyer] {Siddharth Iyer}
\address{School of Mathematics and Statistics, University of New South Wales, Sydney, NSW 2052, Australia}
\email{siddharth.iyer@unsw.edu.au}

\begin{abstract}
Let $\triangle$ denote the integers represented by the quadratic form $x^2+xy+y^2$ and $\BigSquare{2}$ denote the numbers represented as a sum of two squares. For a non-zero integer $a$, let $S(\triangle,\BigSquare{2},a)$ be the set of integers $n$ such that $n \in \triangle$, and $n + a \in \BigSquare{2}$. We conduct a census of $S(\triangle,\BigSquare{2},a)$ in short intervals by showing that there exists a constant $H_{a} > 0$ with
\begin{align*}
\# S(\triangle,\BigSquare{2},a)\cap [x,x+H_{a}\cdot x^{5/6}\cdot \log^{19}x] \geq x^{5/6-\varepsilon}
\end{align*}
for large $x$. To derive this result and its generalization, we utilize a theorem of Tolev (2012) on sums of two squares in arithmetic progressions and analyse the behavior of a multiplicative function found in Blomer, Br{\"u}dern \& Dietmann (2009). Our work extends a classical result of Estermann (1932) and builds upon work of M{\"u}ller (1989).
\end{abstract}

\keywords{Sums of two squares, $x^2+xy+y^2$, Quadratic forms, Ideal Norms}
\subjclass[2020]{11B05, 11B25, 11B34, 11N56}

\maketitle

\tableofcontents
\section{Introduction}
For $n \in \mathbb{Z}$, let $r_{2}(n)$ denote the number of representations of $n$ as a sum of two squares. In 1932 Estermann \cite{Estermann} showed that whenever $a$ is a non-zero integer, one obtains the asymptotic formula
\begin{align}
\label{coreasymptotic}
\sum_{n \leq x}r_{2}(n)\cdot r_{2}(n+a) = C_{a}\cdot x + O(x^{\tau + \varepsilon})
\end{align}
for $\tau = 11/12$ and $C_{a} > 0$. Later, with Weil's work on Kloosterman sums, Hooley provided an improvement to \eqref{coreasymptotic} with $\tau = 5/6$ \cite[p.282]{Hooley2}. Using properties of modular groups, Chamizo showed that $\tau = 2/3$ \cite{Chamizo1}, \cite{Chamizo2}. For two sets of integers $W_{1}$ and $W_{2}$ denote
\begin{align*}
S(W_{1},W_{2},a) = \{n: n \in W_{1}, \ n+a\in W_{2}\}.
\end{align*}
Using the notation $\BigSquare{2}$ to denote the numbers which are a sum of two squares, when $a$ is a non-zero integer we may study the set $S(\BigSquare{2},\BigSquare{2},a)$ and survey the number of its elements in intervals. In this direction, Hooley \cite{Hooley} showed that there exists constants $0<C<D$ depending on $a\neq 0$, so that for large $x \in \mathbb{R}$ one has
\begin{align*}
\frac{C\cdot x }{\log(x)}\leq \# S(\BigSquare{2},\BigSquare{2},a)\cap[1,x] \leq \frac{D\cdot x}{ \log(x)}.
\end{align*}
Using
\begin{align}
\label{4repeqn}
r_{2}(n) = 4\cdot\sum_{d|n}\chi_{4}(d),
\end{align}
where $\chi_{4}$ is the primitive character modulo $4$ \cite[Eqn. 33]{Estermann}, we note that for every $\varepsilon > 0$ there exists a constant $C_{\varepsilon}>0$ so that $r_{2}(n) \leq C_{\varepsilon}\cdot n^{\varepsilon}$, thus a quick application of the asymptotic formula \eqref{coreasymptotic} with $\tau = 2/3$ leads us to a lemma that counts elements of $S(\BigSquare{2},\BigSquare{2},a)$ in short intervals.
\begin{lem}
\label{Chamizoconsequence}
Let $\beta$ and $\varepsilon > 0$. There exists a number $N_{\beta,\varepsilon}>0$ so that
\begin{align*}
\#S(\BigSquare{2},\BigSquare{2},a)\cap[x, x+x^{2/3+\beta}] \geq x^{2/3 + \beta - \varepsilon}
\end{align*}
for $x \geq N_{\beta,\varepsilon}$.
\end{lem}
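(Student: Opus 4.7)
The plan is to extract short-interval information by differencing the Chamizo refinement of \eqref{coreasymptotic} (with exponent $\tau = 2/3$) at the two endpoints of the window $[x, x+H]$, with $H = x^{2/3+\beta}$, and then to convert the resulting lower bound on the $r_2$-weighted sum into a lower bound on the unweighted count using the divisor-type estimate $r_2(n) \leq C_{\varepsilon}\, n^{\varepsilon}$ recorded just before the lemma.

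First, I would apply \eqref{coreasymptotic} with $\tau = 2/3$ at the points $x+H$ and $x$ and subtract, obtaining
\begin{align*}
\sum_{x < n \leq x+H} r_{2}(n)\, r_{2}(n+a) = C_{a}\, H + O\bigl(x^{2/3+\varepsilon_{1}}\bigr),
\end{align*}
where $\varepsilon_{1} > 0$ is at my disposal. Fixing any $\varepsilon_{1} < \beta$, the main term $C_{a}\, x^{2/3+\beta}$ dominates the error for all sufficiently large $x$, giving
\begin{align*}
\sum_{x < n \leq x+H} r_{2}(n)\, r_{2}(n+a) \geq \tfrac{C_{a}}{2}\, x^{2/3+\beta}.
\end{align*}
Next, I would use that the summand is positive precisely when $n \in S(\BigSquare{2}, \BigSquare{2}, a)$ and bound each nonzero summand by $r_{2}(n)\, r_{2}(n+a) \leq C_{\delta}^{2}\, (3x)^{2\delta}$ for $n \in [x, x+H]$ (using $n, n+a \leq 3x$ once $x$ is large). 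Combining,
\begin{align*}
\# S(\BigSquare{2},\BigSquare{2},a) \cap [x, x+H] \geq \frac{C_{a}\, x^{2/3+\beta}}{2\, C_{\delta}^{2}\, (3x)^{2\delta}}.
\end{align*}
Choosing any fixed $\delta$ with $2\delta < \varepsilon$ renders the right-hand side at least $x^{2/3+\beta-\varepsilon}$ for $x \geq N_{\beta,\varepsilon}$.

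The entire argument is essentially bookkeeping: the balancing amounts to ensuring the error term in the differenced asymptotic is strictly smaller than the main term (any $\varepsilon_{1} < \beta$ works) and then taking $\delta$ small enough to absorb the divisor bound into the allowed $\varepsilon$ loss. I anticipate no genuine obstacle, since the hard analytic content is already packaged inside the Chamizo exponent $\tau = 2/3$.
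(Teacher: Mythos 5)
Your argument is correct and is exactly the ``quick application'' the paper has in mind: difference the $\tau=2/3$ asymptotic \eqref{coreasymptotic} over $[x,x+x^{2/3+\beta}]$ so the main term dominates, then divide out the pointwise bound $r_{2}(n)r_{2}(n+a)\ll_{\delta}x^{2\delta}$ to pass from the weighted sum to the count. The paper leaves this deduction implicit, and your bookkeeping fills it in correctly.
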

To bound the largest gaps, one can show the following lemma.
\begin{lem}
\label{Largestgapss2square}
There exists a constant $C_{a}>0$ such that there is an element of $S(\BigSquare{2},\BigSquare{2},a)$ in the interval $[x,x+C_{a}\cdot x^{1/2}]$ whenever $x \geq 1$.
\end{lem}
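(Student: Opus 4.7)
The plan is to construct, for each nonzero integer $a$, an explicit one-parameter family $\{n(v)\}_{v \in \Z}$ of integers such that $n(v), n(v) + a \in \BigSquare{2}$ for every $v$, and then to verify that the family is dense enough that every interval $[x, x + C_a x^{1/2}]$ meets it. The construction splits according to the residue of $a$ modulo $4$, since the only obstruction to writing $a = s^2 - r^2$ with $r, s \in \Z$ is $a \equiv 2 \pmod{4}$.

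When $a \not\equiv 2 \pmod{4}$, I would pick integers $r, s$ with $a = s^2 - r^2$ (e.g.\ $s = (a+1)/2$, $r = (a-1)/2$ for $a$ odd; $s = a/4 + 1$, $r = a/4 - 1$ for $4 \mid a$), and set $n(v) = r^2 + v^2$, so that $n(v) + a = s^2 + v^2$ is also a sum of two squares. The requirement $n(v) \in [x, x + C_a x^{1/2}]$ becomes the requirement that $v$ lie in a real interval of length
\[
\sqrt{x - r^2 + C_a x^{1/2}} - \sqrt{x - r^2} \;\geq\; \tfrac{C_a}{3}
\]
for $x$ sufficiently large compared with $r^2$, which produces an integer $v$ as soon as $C_a \geq 3$.

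For $a \equiv 2 \pmod{4}$, I would write $a = 2b$ with $b$ odd (so $b - 1$ is even) and exploit the two-variable identity
\[
(v+1)^2 + (b - v)^2 - v^2 - (b - 1 - v)^2 = 2b.
\]
This shows that $n(v) := v^2 + (b-1-v)^2$ and $n(v) + a = (v+1)^2 + (b-v)^2$ are simultaneously sums of two squares. Substituting $t = v - (b-1)/2 \in \Z$ rewrites the family as $n(v) = 2t^2 + (b-1)^2/2$, i.e.\ a shift of $\{2t^2 : t \in \Z\}$, whose gaps at height $\asymp x$ are $\asymp \sqrt{x}$. Hence a sufficiently large $C_a$ again forces a family member into $[x, x + C_a x^{1/2}]$ for all large $x$.

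The finitely many small $x \in [1, X_a]$ not covered by these asymptotic bounds are absorbed by enlarging $C_a$ by an $a$-dependent constant, using that each constructed family is infinite and therefore has a well-defined smallest element. I do not expect any serious obstacle; the only delicate point is the parity obstruction in the case $a \equiv 2 \pmod{4}$, which the two-variable identity handles cleanly, and the rest is elementary density of a quadratic progression.
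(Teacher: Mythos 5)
Your proposal is correct, and its engine is the same as the paper's: exhibit an explicit one-parameter quadratic family inside $S(\BigSquare{2},\BigSquare{2},a)$ and use the fact that consecutive values of a quadratic progression near $x$ differ by $O(\sqrt{x})$. For odd $a$ you even use the identical identity $\bigl(v^2+(\tfrac{a-1}{2})^2\bigr)+a=v^2+(\tfrac{a+1}{2})^2$. The only genuine divergence is the treatment of even $a$: the paper writes $a=2^{t}a'$ with $a'$ odd and pushes the odd-$a'$ family forward by multiplication by $2^{t}$, using that $\BigSquare{2}$ is closed under products, so that an element of $S(\BigSquare{2},\BigSquare{2},a')$ in $[x/2^{t},\,x/2^{t}+C(x/2^{t})^{1/2}]$ yields an element of $S(\BigSquare{2},\BigSquare{2},a)$ in $[x,\,x+2^{t/2}C\,x^{1/2}]$; you instead give direct difference-of-two-squares parametrizations for $4\mid a$ and the two-variable identity $(v+1)^2+(b-v)^2-v^2-(b-1-v)^2=2b$ for $a\equiv 2\pmod 4$ (which checks out, and correctly isolates the one residue class where $a=s^2-r^2$ is impossible). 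The paper's reduction is shorter but leaves the $2^{t/2}$ dilation of the interval and the small-$x$ bookkeeping implicit; your version is a bit longer but self-contained in each residue class and more explicit about the interval-length and small-$x$ details. Either route proves the lemma.
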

\begin{proof}
If $a$ is odd and $a' = 2^{t}a$ for some natural number $t$, note that $g \in S(\BigSquare{2},\BigSquare{2},a)$ implies $2^{t}\cdot g \in S(\BigSquare{2},\BigSquare{2},2^{t}\cdot a)$, since the product of a sum of two squares is a sum of two squares. Thus it is enough to prove the lemma for odd values of $a$. Now observe that when $s$ is an integer we have
\begin{align*}
\left(s^2 +\left(\frac{a-1}{2}\right)^2\right) +a= s^2 +\left(\frac{a+1}{2}\right)^2.
\end{align*}
\end{proof}
Let $\triangle$ denote the set of integers represented by the quadratic form $x^2+xy+y^2$. We similarly examine the elements of $S(\triangle,\BigSquare{2},a)$ in short intervals.
\begin{cor}
\label{lowerboundtrianglesquare}
There exists a constant $H_{a} > 0$ such that for any $\varepsilon>0$, and all sufficiently large $x \geq N_{a,\varepsilon}$, one has
\begin{align*}
\# S(\triangle,\BigSquare{2},a)\cap [x,x+H_{a}\cdot x^{5/6}\cdot \log^{19}x] \geq x^{5/6-\varepsilon}.
\end{align*}
\end{cor}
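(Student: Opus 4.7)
The plan is to estimate from below the correlation sum
\[
\cS(x, H; a) \;:=\; \sum_{n \in [x,\, x+H]} r_3(n)\, r_2(n+a),
\]
where $H = H_a\, x^{5/6}\log^{19}x$, $r_3(n) = \#\{(\ell,m)\in\Z^2 : \ell^2+\ell m+m^2 = n\}$, and $r_2(n) = \#\{(u,v)\in\Z^2 : u^2+v^2 = n\}$, and then deduce the corollary via the divisor bound $r_2(n), r_3(n) \ll_\varepsilon n^\varepsilon$. The reduction is immediate: since $\1_{\triangle}(n) \geq r_3(n)/\max_{n \in I} r_3$ and $\1_{\BigSquare{2}}(n+a) \geq r_2(n+a)/\max_{n \in I} r_2$, one has
\[
\#\bigl(S(\triangle, \BigSquare{2}, a) \cap [x,\, x+H]\bigr) \;\gg_\varepsilon\; x^{-2\varepsilon}\cdot \cS(x, H; a),
\]
so it suffices to show $\cS(x, H; a) \gg_a H\,(\log x)^{-K}$ for some fixed $K$; after absorbing logarithmic factors into $\varepsilon$ this yields the desired $x^{5/6-\varepsilon}$ lower bound.

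For the estimate on $\cS$, I would expand one of the representation functions as a Dirichlet convolution --- using $r_3(n) = 6\sum_{e\mid n}\chi_{-3}(e)$ where $\chi_{-3}$ is the non-principal character modulo $3$ --- and interchange the order of summation:
\[
\cS(x, H; a) = 6\sum_{e \geq 1} \chi_{-3}(e) \sum_{\substack{n \in [x,\, x+H]\\ e\mid n}} r_2(n+a).
\]
For each $e$ up to a threshold $e \leq (\log x)^{A}$ (with $A$ to be chosen), the inner sum is a weighted count of sums of two squares in the short interval $[x+a,\, x+H+a]$ restricted to the residue class $N \equiv a \pmod e$, which is precisely the object of Tolev's 2012 theorem. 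Because $H \geq x^{5/6}$ sits well above Tolev's interval-length threshold (essentially $x^{1/2}$ up to logarithms) and the modulus $e$ is polylogarithmic, Tolev's theorem yields an asymptotic of the form $\rho_2(e, a)\cdot H$ for the inner sum, with $\rho_2(e, a)$ a nonnegative local density. For $e$ beyond the threshold, the contribution is handled by an upper-bound sieve combined with the decay of $\chi_{-3}(e)\rho_2(e, a)$.

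Summing the Tolev main terms over $e$ produces a singular series
\[
\fS(a) \;=\; 6\sum_{e \geq 1} \chi_{-3}(e)\,\rho_2(e, a)
\]
with an Euler product factorization. The multiplicative function $e \mapsto \chi_{-3}(e)\,\rho_2(e,a)$ is exactly the one analyzed by Blomer, Br\"udern \& Dietmann (2009); their work provides the absolute convergence of the series, the explicit local factors, and a strictly positive lower bound on the Euler product, giving $\cS(x, H; a) = \fS(a)\,H\,(1+o(1)) \gg_a H$. The factor $\log^{19}x$ in $H$ is calibrated so that the cumulative logarithmic losses --- from Tolev's error term, the truncation of the singular series, and the $x^{2\varepsilon}$ loss in the pointwise divisor bounds on $r_2$ and $r_3$ --- all remain below the main term.

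The hardest step will be performing the Tolev application uniformly across the effective range of moduli $e \leq (\log x)^A$ and bounding the tail $e > (\log x)^A$. The interval-length exponent $5/6$ in $H$ is dictated by Tolev's requirement that the short interval have length at least $x^{1/2+o(1)}$, combined with the range of moduli one can afford to sum over; the $\log^{19}$ cushion is the precise slack needed for all logarithmic losses in Tolev's error, in the tail of the Blomer-Br\"udern-Dietmann multiplicative function, and in the divisor-bound reduction to fit below the main term.
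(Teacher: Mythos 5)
Your overall architecture matches the paper's (expand one representation function as a divisor sum, feed the resulting arithmetic progressions into Tolev's theorem, and control the singular series via the multiplicative function of Blomer--Br\"udern--Dietmann), but there is a genuine gap at the central step: the truncation of the divisor sum at $e \leq (\log x)^A$. After writing
\[
\cS(x,H;a) = 6\sum_{e\ge 1}\chi_{3}(e)\sum_{\substack{n\in[x,x+H]\\ e\mid n}} r_2(n+a),
\]
the moduli $e$ genuinely range up to $x+H$, and the block $(\log x)^A < e \le H$ contributes, in absolute value, roughly $\sum_{e>(\log x)^A} (H/e)\,x^{\varepsilon} \gg H\log x$, which swamps the intended main term $\asymp H$. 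An upper-bound sieve only controls absolute values; to dispose of this range you must exhibit cancellation from the sign changes of $\chi_{3}(e)$, and that requires an asymptotic with a power-saving error term for the inner sum \emph{uniformly in $e$ up to a power of $x$} --- i.e.\ exactly the full-strength application of Tolev that you were trying to avoid. The paper's route is the square-root trick (its Lemma 3.2): since the character is real and $F_{\psi}(n)=0$ when $\psi(n)=-1$, one restricts to $\psi(n)=1$ and writes $F_\psi(n) = 2\sum_{d\mid n,\,d<\sqrt n}\psi(d)+\psi(\sqrt n)$, so the moduli run only up to $\sqrt x$ but no further truncation is possible; Tolev's bound $R_{q,a}(x)\ll (q^{1/2}+x^{1/3})(a,q)^{1/2}\tau^4(q)\log^4 x$ is then summed over all $d<\sqrt x$, and $\sum_{d<\sqrt x} x^{1/3}\tau^4(d)\log^4 x \ll x^{1/3}\cdot x^{1/2}\log^{15}x\cdot\log^4 x = x^{5/6}\log^{19}x$. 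This is where both the exponent $5/6$ and the power $19$ actually come from --- not, as you suggest, from a lower bound on the admissible interval length in Tolev's theorem. (Relatedly, the paper proves the asymptotic for the complete sum up to $x$ and only then differences at two endpoints; working in the short interval from the outset changes nothing essential but does not rescue the truncation.)

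A secondary point: you assert that Blomer--Br\"udern--Dietmann supply ``a strictly positive lower bound on the Euler product'' for $\fS(a)=6\sum_e \chi_3(e)\rho_2(e,a)$. Since $\rho_2(e,a)\asymp 1/e$, this series converges only conditionally at the relevant point, and its non-vanishing is the analogue of $L(1,\chi)\neq 0$ for a real character; it is not automatic from the local computations. The paper devotes Section 3.1 to precisely this, proving $\beta(\psi,a)>0$ by a Landau-type argument: the product $H(b,a,s)=\prod_{\rho_b}G(\rho_b,b,a,s)$ over all characters modulo $b$ is shown to blow up as $s\to 1^{+}$, which forces each twisted factor to be non-zero at $s=1$, and positivity then follows from positivity of the local factors on $s>1$ together with continuity. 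Your proposal needs an argument of this kind, or an explicit citation of a non-vanishing statement, to conclude $\fS(a)>0$.
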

Let $n$ be a positive integer, denote $R_{2}(n)$ to be the number of solutions to the equation $n = x^2+xy+y^2$, and $\chi_{3}$ to be the non-trivial character modulo $3$. A key ingredient in the proof of Theorem \ref{lowerboundtrianglesquare} is the following formula.
\begin{lem}
\label{r3equality}
$R_{2}(n) = 6\cdot\sum_{d|n}\chi_{3}(d)$.
\end{lem}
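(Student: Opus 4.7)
My plan is to interpret $R_{2}(n)$ as counting Eisenstein integers of norm $n$, use that $\mathbb{Z}[\omega]$ is a PID with a unit group of order $6$, and derive the formula from the splitting behavior of rational primes. Let $\omega = e^{\pi i/3}$ be a primitive sixth root of unity, so that $\omega \bar{\omega} = 1$ and $\omega + \bar{\omega} = 1$. The norm form of $\mathbb{Z}[\omega]$ is then
\[
N(x+y\omega) = (x+y\omega)(x+y\bar{\omega}) = x^{2} + xy + y^{2},
\]
so $R_{2}(n)$ equals the number of $\alpha \in \mathbb{Z}[\omega]$ with $N(\alpha) = n$.

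Since $\mathbb{Z}[\omega]$ is Euclidean, hence a PID, and its unit group $\{\pm 1, \pm \omega, \pm \omega^{2}\}$ has order $6$, every ideal of norm $n$ has exactly $6$ generators, each of norm $n$. I would conclude $R_{2}(n) = 6 \cdot I(n)$, where $I(n)$ is the number of ideals of norm $n$. Both $I$ and $n \mapsto \sum_{d \mid n} \chi_{3}(d)$ are multiplicative --- the first by unique factorization of ideals in $\mathbb{Z}[\omega]$, the second as a Dirichlet convolution of multiplicative functions --- so it suffices to match their values on prime powers $p^{e}$.

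To do this I would classify the splitting of $p$ in $\mathbb{Z}[\omega]$ by factoring the minimal polynomial $X^{2} - X + 1$ of $\omega$ modulo $p$: it is a square at $p = 3$ (ramified, one prime above of norm $3$), splits into two distinct linear factors when $p \equiv 1 \pmod 3$ (split, two primes of norm $p$), and is irreducible when $p \equiv 2 \pmod 3$ (inert, one prime of norm $p^{2}$). Counting ideals of norm $p^{e}$ in the three cases gives $1$, $e+1$, and the value $1$ if $e$ is even and $0$ otherwise, respectively; these are exactly the partial sums $\sum_{k=0}^{e} \chi_{3}(p^{k})$. The mildly delicate point is the inert case, where $\chi_{3}(p)^{k}$ alternates and the partial sum vanishes for odd $e$, matching the nonexistence of ideals of norm $p^{e}$ in that range. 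Beyond this case check, the proof uses nothing deeper than the PID property of $\mathbb{Z}[\omega]$ and the standard dictionary between $\chi_{3}$ and the splitting of rational primes.
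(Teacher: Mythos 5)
Your proposal is correct and follows essentially the same route as the paper: both identify $R_{2}(n)$ with $6$ times the number of ideals of norm $n$ in the PID $\mathbb{Z}[\omega]$ using its six units. The only difference is that the paper quotes the identity $\mathcal{I}(n)=\sum_{d\mid n}\chi_{3}(d)$ from M\"uller and instead spends its effort proving the two inequalities $R_{2}(n)\geq 6\,\mathcal{I}(n)$ and $R_{2}(n)\leq 6\,\mathcal{I}(n)$, whereas you treat the unit correspondence as a single $6$-to-$1$ surjection and supply a self-contained proof of the cited identity via multiplicativity and the splitting of primes in $\mathbb{Q}(\sqrt{-3})$ --- a harmless and slightly more complete variant.
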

\begin{proof}
The quadratic field $\mathbb{Q}(\sqrt{-3})$ has discriminant of $-3$, and has the ring of integers $\mathbb{Z}[\frac{1+\sqrt{-3}}{2}]$, which is a principal ideal domain. With $\omega = \frac{1+\sqrt{-3}}{2},$ the function $N : \mathbb{Z}[\omega] \rightarrow \mathbb{N}\cup \{0\}$ given by
\begin{align*}
N(a+b\omega) = (a+b\omega)\cdot(a+b\bar{\omega}) = a^2 +ab+b^2
\end{align*}
 is a norm on the ring $\mathbb{Z}[\omega]$. Solving $N(a+b\omega) = 1$, we verify that $\mathbb{Z}[\omega]$ has $6$ units, namely $1,-1,\omega,-\omega,\bar{\omega},-\bar{\omega}$.

Within the ring $\mathbb{Z}[\omega]$ introduce the equivalence relation $c\sim d$ if and only if $d =u c$, where $u$ is a unit. Denote $\mathcal{I}(n)$ to be the number of ideals in $\mathbb{Z}[\omega]$ with norm (ideal norm) of $n$. From \cite[pg. 301]{Muller} we gather that
 \begin{align}
 \label{Idealformulamod3}
 \mathcal{I}(n) = \sum_{d|n}\chi_{3}(d).
 \end{align}
 For an ideal $W \subseteq \mathbb{Z}[\omega]$ denote $P(W) := [\mathbb{Z}[\omega]: W]$, that is the ideal norm of $W$ in $\mathbb{Z}[\omega]$. If we denote $\overline{W}$ to be the set of elements which are conjugate of elements from $W$, then $\overline{W}$ forms an ideal, and by symmetry we obtain that $P(W) = P(\overline{W})$. Furthermore note that $P(\cdot)$ is completely multiplicative on ideals. Now suppose that $W_{1},\ldots,W_{\mathcal{I}(n)}$ are distinct ideals with $P(W_{j}) = n$.
 Since $\mathbb{Z}[\omega]$ is a principal ideal domain, we can choose integers $a_{j}$ and $b_{j}$ so that $W_{j} = (a_{j}+\omega\cdot b_{j})$, with
 \begin{align*}
 a_{j}+\omega\cdot b_{j} \not\sim a_{k}+\omega\cdot b_{k}
 \end{align*}
for $j \neq k$. We establish the ideal equality
\begin{align*}
W_{j}\cdot \overline{W_{j}} = (a_{j}+\omega\cdot b_{j})\cdot (a_{j}+\bar{\omega}\cdot b_{j}) = (a_{j}^2+a_{j}\cdot b_{j}+b_{j}^2).
\end{align*}
Giving us the equation
\begin{align*}
P((a_{j}+\omega\cdot b_{j})\cdot (a_{j}+\bar{\omega}\cdot b_{j})) = P((a_{j}^2+a_{j}\cdot b_{j}+b_{j}^2)).
\end{align*}
Expanding both sides of the above equation gives us an equation comparing integer types.
\begin{align*}
P((a_{j}+\omega\cdot b_{j}))^2 = (a_{j}^2+a_{j}\cdot b_{j}+b_{j}^2)^2,
\end{align*}
so that $a_{j}^2 +a_{j}b_{j}+b_{j}^2 = n$. Hence the elements $a_{j}+\omega\cdot b_{j}$ for $j = 1,\ldots,\mathcal{I}(n)$ form non-similar (as per the equivalence relation defined above) solutions to $N(a+\omega\cdot b) = n$ in the variables $a,b \in \mathbb{Z}$. Considering the $6$ units of $\mathbb{Z}[\omega]$, there are at least $6\cdot \mathcal{I}(n)$ solutions to $N(a+\omega\cdot b) = n$, in the variables $a,b \in \mathbb{Z}$. Therefore the inequality 
\begin{align}
\label{r3gbound}
R_{2}(n) \geq 6\cdot \mathcal{I}(n)
\end{align}
is gathered.

Conversely note that there are $\frac{R_{2}(n)}{6}$ non-similar solutions to $N(a+b\omega) = n$ over the variables $a,b \in \mathbb{Z}$. It is noted that
\begin{align*}
P((a+\omega\cdot b)\cdot(a+\bar{\omega}b)) =P(n) = n^2,
\end{align*}
giving us that $P((a+\omega\cdot b)) = n$, as $P((a+\omega\cdot b)) = P((a+\bar{\omega}\cdot b))$. Since $\mathbb{Z}[\omega]$ is a principal ideal domain, we note that the non-similar solutions to $N(a+\omega\cdot b) = n$ give rise to distinct principal ideals with ideal norm of $n$. Hence
\begin{align}
\label{r3lbound}
\frac{R_{2}(n)}{6} \leq \mathcal{I}(n).
\end{align}
With \eqref{r3lbound}, \eqref{r3gbound}, \eqref{Idealformulamod3} we verify the formula $R_{2}(n) = \sum_{d|n}\chi_{3}(d)$.
\end{proof}
When $\psi$ is a character modulo $k$ ($k>1$), define 
\begin{align*}
F_{\psi}(n) = \sum_{d|n}\psi(d).
\end{align*}
For natural numbers $a,k$ denote $P(a,k)$ to be the unique integer $t|a$ so that if a prime $p$ divides $t$ then $p|k$ and $\text{gcd}(a/t,k) = 1$. A ``local version'' of a theorem by M{\"u}ller \cite[Theorem 1]{Muller} states the following.
\begin{thm}
\label{Mullerthm}
Let $\psi$ and $\rho$ be primitive characters modulo $k > 1$. If $a \geq 1$, then
\begin{align*}
\sum_{n \leq x}F_{\psi}(n)\cdot F_{\rho}(n+a) = M_{\psi, \rho}(a)\cdot x+O(x^{5/6 + \varepsilon}),
\end{align*}
where 
\begin{align*}
M_{\psi, \rho}(a) = C_{\psi,\rho}(a)+\left\{k^{-1}\sum_{b| P(a,k)}b^{-1}\sum_{j=1}^{k}\psi(j)\cdot\rho((a/b) + j) \right\}\cdot C_{\bar{\psi},\bar{\rho}}(a)
\end{align*}
and
\begin{align*}
C_{\psi,\rho}(a) = \frac{L(1,\rho)\cdot L(1,\psi)}{L(2,\rho\cdot \psi)}\sum_{d|a}\psi(d)\cdot \rho(d)\cdot d^{-1}.
\end{align*}
\end{thm}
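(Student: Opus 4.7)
The plan is to adapt Müller's proof of the unlocalised version in \cite{Muller}, carrying out a hyperbola-method split that explicitly tracks the local arithmetic of $a$ modulo $k$. Expanding both divisor sums yields
\begin{align*}
\sum_{n \le x} F_\psi(n) F_\rho(n+a) = \sum_{d_1, d_2} \psi(d_1) \rho(d_2) \#\{n \le x : d_1 \mid n,\ d_2 \mid (n+a)\}.
\end{align*}
Setting $e = \gcd(d_1, d_2)$, the congruence system is soluble only when $e \mid a$, and in that case CRT reduces it to a single residue class modulo $d_1 d_2 / e$, so the count equals $xe/(d_1 d_2) + O(1)$. I would then partition the $(d_1, d_2)$-pairs into a ``small'' region $\min(d_1, d_2) \le y$ and a ``large'' region $\min(d_1, d_2) > y$, with $y$ optimised to balance the two resulting error contributions.

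In the small-divisor regime, summing the main term $xe/(d_1 d_2)$ against $\psi(d_1)\rho(d_2)$ — after Möbius inversion over $e \mid a$ and a standard Euler-product rearrangement — decouples into products of Dirichlet series at $s = 1$, producing exactly $C_{\psi,\rho}(a) \cdot x$ (the factor $L(2, \psi\rho)^{-1}$ arises from disentangling coprimality to primes dividing neither $d_1$ nor $d_2$). The $O(1)$ error sums trivially to $O(y \cdot x^\varepsilon)$. In the large-divisor regime the $O(1)$ term is too wasteful, and I would instead apply Poisson summation on $n$ modulo $d_1 d_2 / e$. Expanding $\psi$ and $\rho$ via their Gauss-sum representations shifts the surviving frequency-zero contribution to its dual, replacing $\psi, \rho$ by their conjugates and producing a secondary main term proportional to $C_{\bar\psi, \bar\rho}(a) \cdot x$. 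The explicit local weight in braces emerges when one factors $a = b \cdot (a/b)$ where $b$ is the part of $a$ supported on primes dividing $k$: the divisor $b$ then runs over divisors of $P(a,k)$, and the residual character sum collapses to $\sum_{j=1}^{k} \psi(j) \rho((a/b) + j)$, matching the bracketed expression exactly.

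The nonzero frequencies emerging from Poisson reduce, after separating variables, to incomplete Kloosterman sums controlled by Weil's bound; optimising $y$ against the two error terms then produces a total error of $O(x^{5/6+\varepsilon})$, in direct parallel with Hooley's $\tau = 5/6$ treatment of $r_2 \ast r_2$ \cite[p.~282]{Hooley2}. The main obstacle I foresee is the bookkeeping required to isolate $P(a,k)$ inside the bracketed sum: primes dividing $k$ and primes coprime to $k$ must be separated via a CRT decomposition of $a$, and only after carefully threading this decomposition through both the outer $(d_1, d_2)$-sum and the inner Gauss-sum unfolding does one obtain a sum over $b \mid P(a,k)$ rather than over all $b \mid a$. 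Once this local disentanglement is in place, the remaining Euler-product manipulation to produce the factor $L(1,\psi) L(1,\rho) / L(2, \psi\rho)$ is routine algebra.
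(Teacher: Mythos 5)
First, a point of comparison: the paper does not prove Theorem~\ref{Mullerthm} at all --- it is quoted as a ``local version'' of \cite[Theorem~1]{Muller} and is not used in the proofs of the paper's own results (indeed the text immediately explains why it does \emph{not} suffice for Corollary~\ref{lowerboundtrianglesquare}). So there is no in-paper argument to measure your proposal against; it has to stand on its own as a reconstruction of M\"uller's proof, and as written it has a genuine gap.

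The concrete problem is your error estimate in the ``small'' regime. After opening both divisor sums, the pair $(d_1,d_2)$ ranges with $d_1\le n\le x$ and $d_2\le n+a$; the number of pairs with $\min(d_1,d_2)\le y$ is of order $xy$, so summing the $O(1)$ from the residue-class count over that region gives $O(xy)$, not the claimed $O(y\cdot x^{\varepsilon})$ --- catastrophically too large for any useful choice of $y$. The standard fix (Estermann \cite{Estermann}, Hooley \cite{Hooley2}, and M\"uller \cite{Muller}) is to first apply the divisor symmetry $d\mapsto n/d$ inside each of $F_\psi(n)$ and $F_\rho(n+a)$, which confines both divisors to roughly $\sqrt{x}$; and it is precisely this flip, via $\psi(n/d)=\psi(n)\overline{\psi}(d)$ for $(n,k)=1$, that manufactures the conjugate characters and hence the secondary main term $C_{\overline{\psi},\overline{\rho}}(a)$ together with the bracketed local average $k^{-1}\sum_{j}\psi(j)\rho((a/b)+j)$ (an average of $\psi(n)\rho(n+a)$ over residues, with the $b\mid P(a,k)$ sum absorbing the part of $n$ supported on primes dividing $k$). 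Your proposal instead attributes this term to a Gauss-sum/Poisson duality in a large-divisor regime; a Voronoi-type route of that kind can in principle be made to work (and is where primitivity of $\psi,\rho$ would enter through the functional equation), but you do not carry it out, and the piece you yourself flag as ``the main obstacle'' --- threading the decomposition of $a$ at primes dividing $k$ through to a sum over $b\mid P(a,k)$ rather than $b\mid a$ --- is exactly the part that is asserted rather than proved. The Kloosterman/Weil treatment of the off-diagonal error leading to the exponent $5/6$ is described only at the level of an expected outcome. In short: right family of techniques, but the stated decomposition does not close the error term, and the two structurally novel features of the statement (the conjugated secondary term and the $P(a,k)$ local factor) are not actually derived.
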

With this result, we are able to study the set $S(\triangle, \triangle, a)$ over short intervals; however, it does not suffice to deduce even a slightly weaker version of Corollary \ref{lowerboundtrianglesquare}. This is because we are not allowed to use the characters $\chi_{4}$ and $\chi_{3}$ in Theorem \ref{Mullerthm}. Moreover, lifting these characters to modulus $k$ (where $12 \mid k$) would fail to make them primitive.

For a positive integer $q$ let
\begin{align*}
\eta_{a}(q):= \#\{1\leq \alpha,\beta\leq q: \ \alpha^2 + \beta^2 \equiv a \pmod q\}.
\end{align*}
We have the following theorem.
\begin{thm}
\label{mainsumtwosquaresthm}
Let $\psi$ be a real non-trivial character modulo $b \geq 4$, with $b$ even. If $a \neq 0$ is an integer, then we have
\begin{align}
\label{Theasymptoticformula}
\sum_{n \leq x, \ (n,b) = 1}F_{\psi}(n)\cdot F_{\chi_{4}}(n+a)= \beta(\psi,a)\cdot \eta^{*}(\psi,a)\cdot x+O(x^{5/6}\cdot\log^{19}x),
\end{align}
where 
\begin{align*}
\beta(\psi,a) = \sum_{d= 1}^{\infty}\frac{\psi(d)\cdot\eta_{a}(d)}{d^2} > 0,
\end{align*}
and
\begin{align*}
\eta^{*}(\psi,a) = \sum_{j=1, \ \psi(j-a) = 1}^{b}\frac{\pi \cdot \eta_{j}(b)}{2b^2}.
\end{align*}
\end{thm}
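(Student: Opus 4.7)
The strategy follows the hyperbola-method approach to binary correlation sums of divisor-like functions, but with Tolev's $2012$ theorem on sums of $r_2$ in arithmetic progressions playing the role that Müller's theorem plays in Theorem \ref{Mullerthm} (which we cannot apply directly, since $\chi_4$ is not primitive modulo $b$, and lifting it to modulus $b$ destroys primitivity).

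First, I open the divisor sum $F_\psi(n) = \sum_{d\mid n}\psi(d)$, write $n = dk$, and interchange the order of summation. Since $\psi(d) = 0$ when $(d,b) > 1$, the condition $(n,b) = 1$ collapses to $(k,b) = 1$; using $F_{\chi_4}(N) = r_2(N)/4$ together with Möbius inversion to detect $(k,b)=1$, this yields
\[
\sum_{\substack{n \leq x \\ (n,b)=1}} F_\psi(n)\,F_{\chi_4}(n+a) \;=\; \frac{1}{4}\sum_{\substack{d \leq x \\ (d,b)=1}} \psi(d) \sum_{e \mid b} \mu(e) \sum_{m \leq x/(de)} r_2(dem + a),
\]
with the innermost sum running over $N = dem+a$ in the arithmetic progression $a \pmod{de}$.

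Second, I apply Tolev's theorem on sums of $r_2$ in arithmetic progressions to the inner sum, obtaining a main term of shape $\pi\,\eta_a(de)\,x/(de)^2$ together with a power-saving error. I split the outer $d$-sum at a threshold $D = D(x)$ chosen so that the accumulated Tolev errors over $d \leq D$ match the claimed $O(x^{5/6}\log^{19}x)$ bound, with the logarithmic loss accounting for the divisor-sum swap and the inherent log factors in Tolev's estimate. For $d > D$, I would fall back on the pointwise bound $r_2(N) \ll N^\varepsilon$ combined with standard averaging of $F_\psi$ to control the tail of the $d$-sum by the same error.

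Third, I would assemble the main term. Since $(d,b)=1$ and $e \mid b$ force $(d,e) = 1$, the Chinese remainder theorem gives $\eta_a(de) = \eta_a(d)\,\eta_a(e)$, and the collected main term factors as $\beta(\psi,a) \cdot x$ times a $b$-dependent quantity, once the $d$-sum is extended to infinity (its tail being absorbed into the error via convergence of $\sum_d \psi(d)\eta_a(d)/d^2$). The main obstacle is identifying this $b$-dependent factor with $\eta^*(\psi, a)$: a raw Möbius calculation yields only the ``unweighted'' sum $\sum_{(j-a,b)=1}\eta_j(b)$, whereas $\eta^*(\psi,a)$ halves this by the indicator $\psi(j-a)=1$. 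Extracting this halving requires tracking a secondary contribution produced by the $\chi_4$-component in the Dirichlet factorisation of $r_2$ and converting it through the character identity $\mathbf{1}[\psi(j-a)=1] = (1+\psi(j-a))/2$ on residues coprime to $b$; here I would invoke the multiplicative-function analysis of Blomer, Brüdern and Dietmann together with the observation that $F_\psi$ is supported on $\{n : \psi(n) = 1\}$ (because $\psi$ is real) to justify the final simplification.
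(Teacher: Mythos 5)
There is a genuine gap in your second step: once the divisor sum is opened over all $d\le x$, the tail $d>D$ cannot be controlled the way you propose. Bounding $r_2(dem+a)\ll x^{\varepsilon}$ pointwise gives $\sum_{D<d\le x}\sum_{m\le x/(de)}x^{\varepsilon}\gg x^{1+\varepsilon}\log(x/D)$, which already swamps the main term unless $D=x^{1-o(1)}$; but then the accumulated Tolev errors $\sum_{d\le D}\bigl((bd)^{1/2}+x^{1/3}\bigr)\tau^4(bd)\log^4x\gg D^{3/2}$ destroy the claimed $x^{5/6}$ saving. ``Standard averaging of $F_\psi$'' does not rescue this either, since $\sum_{n\le x}\#\{d\mid n:\ d>D\}\asymp x\log(x/D)$. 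The paper's way out is precisely the observation you relegate to the final simplification: $F_\psi(n)=0$ unless $\psi(n)=1$ (Lemma \ref{Avanishing}), and when $\psi(n)=1$ the pairing $d\leftrightarrow n/d$ satisfies $\psi(d)=\psi(n/d)$, whence $F_\psi(n)=2\sum_{d\mid n,\ d<\sqrt{n}}\psi(d)+\psi(\sqrt{n})$ (Lemma \ref{squareroottrick}). This confines the divisor variable to $d<\sqrt{x}$ \emph{before} Tolev's theorem is invoked, so the moduli are $bd\le b\sqrt{x}$ and the total error is $\ll x^{1/3}\cdot\sqrt{x}\log^{15}x\cdot\log^4x=x^{5/6}\log^{19}x$. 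The same device is what produces the factor $2$ and the restriction $\psi(j-a)=1$ in $\eta^{*}(\psi,a)$: the admissible residues $r\bmod bd$ must satisfy $\psi(r-a)=1$, so your ``main obstacle'' with the constant is a symptom of the missing idea rather than a separate bookkeeping issue.

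A second omission: the theorem asserts $\beta(\psi,a)>0$, and your plan neither proves this nor even justifies convergence of $\sum_{d}\psi(d)\eta_a(d)/d^2$. Writing $\lambda_a(d)=\eta_a(d)/d$, this series is $\sum_d\psi(d)\lambda_a(d)/d$ with $\lambda_a$ bounded but not small on average, so it converges only conditionally and genuinely needs cancellation from $\psi$. The paper spends most of Section 3 on this: using the local formulas of Blomer--Br\"udern--Dietmann for $\lambda_a(p^j)$ it proves $\sum_{n\le x}\rho_b(n)\eta_a(n)\ll x\log x$ for non-trivial $\rho_b$ (Lemma \ref{Twistedsumlem}), deduces that $G(\rho_b,b,a,s)=\sum_n\rho_b(n)\lambda_a(n)n^{-s}$ is analytic for $\mathrm{Re}(s)>0$, and then runs a Landau-type argument on $H(b,a,s)=\prod_{\rho_b}G(\rho_b,b,a,s)$, which is shown to blow up as $s\to1^{+}$, to conclude $G(\psi,b,a,1)=\beta(\psi,a)\neq0$; positivity then follows from positivity of the Euler factors. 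None of this is routine, and it is needed both to make the main term well defined and to obtain the stated lower bound $\beta(\psi,a)>0$, which is the whole point of the application to Corollary \ref{lowerboundtrianglesquare}.
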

Theorem \ref{mainsumtwosquaresthm} implies Corollary \ref{lowerboundtrianglesquare}. With $\chi_{6}$ being the non-trivial real character modulo $6$, using Lemma \ref{r3equality} and equation \eqref{4repeqn}, one has 
\begin{align*}
\sum_{n\leq x, \ (n,6) = 1}R_{2}(n)\cdot r_{2}(n+a) &= 24\cdot\sum_{\substack{n\leq x, \ (n,6)=1\\ n \geq \max\{1,1-a\}}}F_{\chi_{3}}(n)\cdot F_{\chi_{4}}(n+a)\\
&=24\cdot\sum_{\substack{n\leq x, \ (n,6)=1\\ n \geq \max\{1,1-a\}}}F_{\chi_{6}}(n)\cdot F_{\chi_{4}}(n+a).
\end{align*}
Thus
\begin{align*}
\sum_{n\leq x, \ (n,6) = 1}R_{2}(n)r_{2}(n+a) = \text{ }&24\cdot\beta(\chi_{6},a)\cdot \eta^{*}(\psi,a)\cdot x+\\
&+O(x^{5/6}\cdot \log^{19}(x)).
\end{align*}
The quantity $\eta^{*}(\chi_{6},a)$ takes values from the set
\begin{align*}
\{\eta^{*}(\chi_{6},0),\eta^{*}(\chi_{6},1),\ldots,\eta^{*}(\chi_{6},5)\},
\end{align*}
and each such value is positive. Noting that $\beta(\psi,a)>0$, as proved in Theorem \ref{mainsumtwosquaresthm}, we conclude--by differencing the above asymptotic formula at suitable endpoints--that the remainder of the proof of Corollary \ref{lowerboundtrianglesquare} follows as in the deduction of Lemma \ref{Chamizoconsequence}.

It is remarked that when $K$ is a quadratic number field with discriminant $D$, then there exists a primitive character $\chi_{K}$ modulo $D$ so that the number of ideals of norm of $n$ is given by the expression $F_{\chi_{K}}(n)$ \cite{Muller}. Denote
\begin{align*}
\BigDiamond{K}:&= \{n: \ n\in \mathbb{N}, \ \text{ there exists an ideal }I\subseteq K\text{ with }[\mathcal{O}_{K}:I]=n\}\\
&= \{n: \ n\in \mathbb{N}, \ F_{\chi_{K}}(n)>0\},
\end{align*}
so that with Theorem \ref{mainsumtwosquaresthm}, a result similar to Corollary \ref{lowerboundtrianglesquare} can be obtained in the context of studying the set $S(\BigDiamond{K},\BigSquare{2},a)$. Related to this work, we note deep results by Iwaniec \cite[Thm. 1]{Iwaniec} examining $S(Q,\mathbb{P},a)$, when $Q$ is a set containing numbers that assume a quadratic form, and $\mathbb{P}$ is the set of primes.

Analogous to Lemma \ref{Largestgapss2square}, which attempts to study the largest gaps between elements of $S(\BigSquare{2},\BigSquare{2},a)$, we obtain a weaker gap result for sets of the form $S(\triangle, \BigSquare{2},a)$.
\begin{thm}
\label{Largestgapstrianglesquare}
Let $x \geq 1$. There exists a constant $D_{a}>0$ such that there is an element of $S(\triangle, \BigSquare{2},a)$ in the interval $[x, x + D_{a}\cdot x^{\Upsilon(a)}]$, where
\[
\Upsilon(a) = \begin{cases}
                1/2 & a \in \{n^2-3m^2: \ n,m \in \mathbb{Z}\}\\
                5/8 & a \in \mathbb{Z}\setminus\{n^2-3m^2: \ n,m \in \mathbb{Z}\}.
            \end{cases}
\]
\end{thm}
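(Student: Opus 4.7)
The proof splits into two cases according to the definition of $\Upsilon(a)$.

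In Case 1, when $a = n_0^2 - 3m_0^2$, I use the identity
\[
u^2 + 2 m_0 u + (2 m_0)^2 = (u + m_0)^2 + 3 m_0^2,
\]
which, viewed as $x^2 + xy + y^2$ at $(x, y) = (u, 2 m_0)$, shows that $n(u) := (u+m_0)^2 + 3 m_0^2 \in \triangle$ for every integer $u$. Then
\[
n(u) + a = (u+m_0)^2 + 3m_0^2 + n_0^2 - 3m_0^2 = (u+m_0)^2 + n_0^2 \in \BigSquare{2},
\]
so $n(u) \in S(\triangle, \BigSquare{2}, a)$ for every $u$. Since $n(u+1) - n(u) = 2u + 1 + 2m_0 = O(\sqrt{x})$ when $n(u) \asymp x$, choosing $D_a$ sufficiently large in terms of $|m_0|$ and $|n_0|$ gives an element of $S(\triangle, \BigSquare{2}, a)$ in every interval $[x, x + D_a x^{1/2}]$.

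Case 2, when $a \notin \{n^2 - 3m^2 : n,m \in \mathbb{Z}\}$, is the more delicate one. The single-parameter family above fails since $3m^2 + a$ is never a perfect square under this hypothesis. I therefore introduce a second parameter via the identity
\[
(s - t)^2 + (s - t)(2 t) + (2 t)^2 = s^2 + 3 t^2,
\]
which exhibits $n = s^2 + 3t^2 \in \triangle$ for all $(s, t) \in \mathbb{Z}^2$. To also force $n + a \in \BigSquare{2}$, I impose the stronger demand $n + a = (2t)^2 + r^2$ for an integer $r$, which rearranges to
\[
r^2 = s^2 + a - t^2, \qquad \text{equivalently,} \qquad (s - r)(s + r) = t^2 - a.
\]
Thus every factorization $t^2 - a = d \cdot d'$ with $d \equiv d' \pmod 2$ produces an element $n = ((d+d')/2)^2 + 3t^2 \in S(\triangle, \BigSquare{2}, a)$, taking $s = (d + d')/2$ and $r = (d' - d)/2$.

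To complete Case 2 I must produce such $(t, d)$ with $n \in [x, x + D_a x^{5/8}]$. The natural scaling is $d \asymp x^{1/4}$ and $t \asymp x^{3/8}$, giving $d' = (t^2-a)/d \asymp \sqrt{x}$, $s = (d + d')/2 \asymp \sqrt{x}$, and $n = s^2 + 3t^2 \asymp x$; differentiating shows that consecutive admissible pairs yield $n$-values separated by $\asymp x^{5/8}$. A first-moment computation bounds the number of admissible $(t, d)$ producing $n \in [x, 2x]$ from below by $\gg x^{3/8}$, using standard divisor estimates and the solvability of $t^2 \equiv a \pmod{d}$ for a positive proportion of moduli $d$ (this is where the hypothesis $a \neq 0$ enters, ruling out degenerate solutions). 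The principal obstacle is to upgrade this count to the \emph{specific} window $[x, x + D_a x^{5/8}]$ rather than merely \emph{some} window of this length; I expect to handle this by an equidistribution/second-moment argument on the $n$-values that rules out clustering near any one point, so that the gaps between consecutive $n$-values built from the construction are uniformly $O(x^{5/8})$.
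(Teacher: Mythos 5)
Your Case 1 is correct and is essentially the paper's argument: for $a=n_0^2-3m_0^2$ the one-parameter family $u^2+3m_0^2\in S(\triangle,\BigSquare{2},a)$ has consecutive gaps $O(\sqrt{x})$, which gives the exponent $1/2$.

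Case 2 contains a genuine gap, located exactly at the step you yourself flag as ``the principal obstacle,'' and the method you propose for closing it cannot work in principle. Your family produces an element $n=s^2+3t^2$ only when the divisibility $d\mid t^2-a$ holds, so your ``fine'' parameter is a divisor of $t^2-a$ rather than a freely chosen integer. For a single fixed $d\asymp x^{1/4}$ the admissible $t$ lie in a few residue classes modulo $d$, hence are spaced roughly $x^{1/4}$ apart; since $\partial n/\partial t = 2s\,t/d+6t\asymp x^{5/8}$ per unit step in $t$, the $n$-values coming from one modulus are spaced $\asymp x^{7/8}$ apart, worse than trivial. Covering every window of length $x^{5/8}$ therefore requires the $n$-values arising from \emph{many different} moduli $d$ to interleave with no gaps, and a first-moment count combined with a second-moment or equidistribution bound can only show that \emph{almost all} windows are hit; no variance estimate rules out a single exceptional interval, whereas the theorem is a statement about \emph{every} $x\geq 1$. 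The paper sidesteps this entirely by not tying the second square to $2t$: it solves $c^2+3d^2+a=u^2+v^2$ with both $v$ and $d$ genuinely free (taking the trivial factorization $c+u=1$), yielding the explicit two-parameter family $f(v,d)=\bigl(\tfrac{v^2-3d^2-a+1}{2}\bigr)^2+3d^2\in S(\triangle,\BigSquare{2},a)$. For a given $x$ one first chooses $d=Q^{*}(x)\asymp x^{1/4}$ so that $f(0,d)$ overshoots $x$ by $E(x)\asymp x^{3/4}$, and then chooses $v\asymp x^{1/8}$ of prescribed parity (a free integer stepping by $2$) to descend back down; by the mean value theorem the landing point exceeds $x$ by at most $|h_x'|\cdot 2\asymp x^{1/8}\cdot x^{1/2}=x^{5/8}$. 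This is deterministic and handles every $x$. To salvage your approach, replace the constraint $n+a=(2t)^2+r^2$ by $n+a=v^2+r^2$ with $v$ free; you then recover the paper's construction.
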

A generalisation of Theorem \ref{Largestgapstrianglesquare} can be obtained, but we do not pursue that here.
\section{Notation}
\begin{itemize}
\item For two integers $u,v$, we let $(u,v)$ denote the greatest common divisor of $u$ and $v$.
\item For a prime $p$ and an integer $w$ we let $\nu_{p}(w)$ to be the largest integer $t$ with $p^{t}|w$ (defined to be $\infty$ when $w = 0$).
\item $\phi(\cdot)$ is the Euler totient function.
\item $\zeta(\cdot)$ is the Riemann zeta function.
\item Let $U$, $V$, and $W$ be complex quantities dependent on parameters $\mathbf{P}$ from some set, along with (possibly) $d,q \in \mathbb{N}$ and $x \in \mathbb{R}$. Unless otherwise specified, we write $U \ll V$, $V \gg U$ or $U = O(V)$ if there exists an effective constant $C > 0$, independent of $d,q$ and $x$, such that $|U| \leq C\cdot V$ for all choices of $d,q$ and $x$ (with other variables fixed). Further, we write $U = V+O(W)$ if $U-V \ll W$.
\end{itemize}
\section{Proof of Theorem \ref{mainsumtwosquaresthm}}
We interpret a result of Tolev \cite[Eqn. (10)]{Tolev}.
Put
\begin{align*}
S_{q,a}(x) &:= \sum_{\substack{n \leq x \\ n \equiv a \pmod{q}}} F_{\chi_{4}}(n) \\\\
&=\pi\frac{\eta_{a}(q)}{4q^2}x + R_{q,a}(x).
\end{align*}
Then
\begin{align}
\label{TolevLem}
R_{q,a}(x) \ll (q^{1/2} + x^{1/3})(a,q)^{1/2} \tau^4(q) \log^4x.
\end{align}
With the notation $F_{\psi}(\cdot) = A(\cdot)$ and $F_{\chi_{4}}(\cdot) = B(\cdot)$, the main expression we wish to asymptotically evaluate is
\begin{align*}
J(x) := \sum_{n \leq x, \ (n,b) = 1} A(n)\cdot B(n+a).
\end{align*}
We need two lemmas.
\begin{lem}
\label{Avanishing}
If $\psi(n) = -1$ then $A(n)=0$.
\end{lem}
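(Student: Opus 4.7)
The plan is to exploit multiplicativity. Both $\psi$ and $F_\psi$ are multiplicative functions (the latter is the Dirichlet convolution $\psi * \mathbf{1}$), so I would factor $n = \prod_{p} p^{\nu_p(n)}$ and evaluate $F_\psi(n) = \prod_{p \mid n} F_\psi(p^{\nu_p(n)})$.

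First I would isolate the hypothesis $\psi(n) = -1$. Since $\psi$ is real and non-trivial with $\psi(n) \neq 0$, every prime $p \mid n$ satisfies $(p,b)=1$, so $\psi(p) \in \{-1,+1\}$. Writing $\psi(n) = \prod_{p \mid n}\psi(p)^{\nu_p(n)}$, the condition $\psi(n) = -1$ forces the number of primes $p \mid n$ with $\psi(p) = -1$ and $\nu_p(n)$ odd to be odd; in particular, at least one such prime $p_0$ exists.

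Next I would compute the local factor at $p_0$ directly. With $k := \nu_{p_0}(n)$ odd and $\psi(p_0)=-1$, one has
\begin{align*}
F_\psi(p_0^{k}) = \sum_{j=0}^{k}\psi(p_0)^{j} = \sum_{j=0}^{k}(-1)^{j} = 0,
\end{align*}
since $k+1$ is even. By the multiplicativity of $F_\psi$, the vanishing of this single local factor forces $F_\psi(n) = 0$, which is the conclusion.

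There is no real obstacle here; the only thing to be careful about is confirming that $\psi(n) = -1$ genuinely produces an odd-exponent prime of character value $-1$ (rather than, say, allowing all exponents to be even, which would give $\psi(n)=+1$). This is a parity argument on the prime factorization, and the rest is a one-line geometric sum.
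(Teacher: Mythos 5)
Your proposal is correct and follows essentially the same route as the paper: both use the multiplicativity of $A = \psi * \mathbf{1}$ and the vanishing of the local factor $A(p^j)$ at a prime with $\psi(p) = -1$ and $j$ odd. Your write-up merely makes explicit the parity step (that $\psi(n)=-1$ forces at least one such prime) which the paper leaves implicit in ``hence the lemma follows.''
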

\begin{proof}
Observe that $A$ is the convolution of $\psi$ and $1$, and thus is multiplicative. On primes $p$, with $\psi(p) = -1$ we have that $A_{b}(p^j) = 0$ if $j$ is an odd number. Hence the lemma follows.
\end{proof}
Extend the character $\psi$ to the real numbers by setting $\psi(w) = 0$ whenever $w \not \in \mathbb{Z}$. When $\psi(n) = 1$ and $d|n$, we observe that $\psi(n/d) = \psi(d)$, and this symmetry leads to the following lemma.
\begin{lem}
\label{squareroottrick}
If $\psi(n)= 1$ then
\begin{align*}
A(n) = \left(2\sum_{d < \sqrt{n}}\psi(d)\right) +\psi(\sqrt{n}).
\end{align*}
\end{lem}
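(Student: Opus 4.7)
The plan is to exploit the classical divisor-pairing symmetry $d \leftrightarrow n/d$ in $A(n) = \sum_{d \mid n}\psi(d)$. Since $\psi$ is a character modulo $b$ (hence completely multiplicative on $\mathbb{Z}$ and vanishing on integers not coprime to $b$), the hypothesis $\psi(n) = 1$ in particular forces $(n,b) = 1$, so every divisor $d$ of $n$ is itself coprime to $b$ and thus $\psi(d) \in \{-1, +1\}$.

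The first step is to record that $\psi(n/d) = \psi(d)$ for every divisor $d$ of $n$: complete multiplicativity gives $\psi(d)\psi(n/d) = \psi(n) = 1$, and since $\psi$ is a real character the only possibilities are $\psi(d) = \psi(n/d) = +1$ or $\psi(d) = \psi(n/d) = -1$.

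Next, I would partition the divisors of $n$ into the three sets $\{d \mid n : d < \sqrt{n}\}$, $\{d \mid n : d = \sqrt{n}\}$, and $\{d \mid n : d > \sqrt{n}\}$. The middle set is either empty (when $n$ is not a perfect square) or the single element $\sqrt{n}$; in view of the extension $\psi(w) = 0$ for $w \notin \mathbb{Z}$, it contributes exactly $\psi(\sqrt{n})$ in either case. The map $d \mapsto n/d$ is a bijection between the first and third sets, and the symmetry above gives
\[
\sum_{d \mid n, \, d > \sqrt{n}} \psi(d) \;=\; \sum_{d \mid n, \, d < \sqrt{n}} \psi(n/d) \;=\; \sum_{d \mid n, \, d < \sqrt{n}} \psi(d).
\]
Summing the three pieces recovers the stated identity (interpreting $\sum_{d < \sqrt{n}}$ in the statement as a sum over divisors of $n$).

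I do not anticipate a substantive obstacle here: this is essentially the standard ``hyperbola-method'' folding of a divisor sum, with the extra ingredient that paired divisors carry equal character values. The one delicate point worth emphasising is that $\psi(d) \neq 0$ for every $d \mid n$, which is exactly what the assumption $\psi(n) = 1$ (as opposed to merely $\psi(n) \neq -1$) supplies; without it, the pairing $\psi(d)\psi(n/d) = \psi(n)$ could degenerate to $0 = 0$ and the bijective cancellation argument would fail.
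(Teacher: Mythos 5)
Your proof is correct and follows exactly the route the paper intends: the paper states this lemma without a formal proof, justifying it only by the preceding remark that $\psi(n/d)=\psi(d)$ whenever $\psi(n)=1$ and $d\mid n$, which is precisely the divisor-pairing symmetry you establish and then fold. Your additional observations (that the sum is over divisors, that the middle term handles the non-square case via the convention $\psi(w)=0$ for $w\notin\mathbb{Z}$, and that $\psi(n)=1$ rather than $\psi(n)\neq -1$ is what makes the pairing non-degenerate) are all consistent with how the lemma is used later in the evaluation of $J(x)$.
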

With Lemmas \ref{Avanishing} and \ref{squareroottrick} we write
\begin{align*}
J(x) &= \sum_{n\leq x, \ \psi(n) = 1}A(n)\cdot B(n+a)\\
&= \sum_{n\leq x, \ \psi(n) = 1}\left(\sum_{d|n}\psi(d)\right)\cdot B(n+a)\\
&= \sum_{n\leq x, \ \psi(n) = 1}\left(2\cdot\left( \sum_{d|n, \ d<\sqrt{n}}\psi(d)\right) + \psi(\sqrt{n})\right)\cdot B(n+a)\\
&= \sum_{n\leq x, \ \psi(n) = 1}\left(2\cdot \sum_{d|n, \ d<\sqrt{n}}\psi(d) \right)B(n+a) + O(x^{1/2 + \varepsilon}).
\end{align*}
The above equation implies 
\begin{align*}
J(x) = \sum_{1 \leq d < \sqrt{x}}2\psi(d)\left(\sum_{\substack{d^2 < n \leq x \\ \ d|n, \ \psi(n) = 1}}\hspace{-0.5cm}B(n+a)\right) + O(x^{1/2 + \varepsilon}).
\end{align*}
For $(b,d) = 1$, let $V(d)$ be the set of residue classes modulo $bd$ so that if $r$ is a congruence class modulo $bd$, then $r \equiv a \pmod d$ and $\psi(r-a) = 1$. Note that $V(d)$ has $\phi(b)/2$ residue classes. List the residue classes as $r(1,d), r(2,d),...,r(\phi(b)/2,d)$ respectively. Note that
\begin{align*}
J(x) = \sum_{1 \leq d < \sqrt{x}}2\psi(d) \sum_{j=1}^{\phi(b)/2} \left(\sum_{\substack{m \equiv r(j,d)  \pmod{bd}\\ d^2+a<m \leq x+a}}\hspace{-0.5cm} B(m)\right) + O(x^{1/2 + \varepsilon}).
\end{align*}
Let 
\begin{align*}
Q(j;x) = \sum_{\substack{m \equiv r(j,d)  \pmod{bd} \\ d^2+a<m \leq x+a}} \hspace{-0.5cm}B(m).
\end{align*}
By the bound \eqref{TolevLem} we have
\begin{align*}
Q(j;x) =&\pi \frac{\eta_{r(j,d)}(bd)}{4b^2 d^2}(x-d^2) +\\
&+O\left(\left((bd)^{1/2} + x^{1/3}\right)\cdot(r(j,d),bd)^{1/2}\cdot\tau^{4}(bd)\cdot\log^{4}x\right).
\end{align*}
Since $a \neq 0$ we have $(r(j,d),bd) \leq |ab|$,  thus we have
\begin{align*}
Q(j;x) = \pi \frac{\eta_{r(j,d)}(bd)}{4b^2 d^2}(x-d^2) + O(x^{1/3}\cdot\tau^4(d)\cdot\log^4x),
\end{align*}
where the implied constant in the $O-$notation is independent of $d$ in the range $1 \leq d < \sqrt{x}$. Using $\sum_{1\leq d < \sqrt{x}}\tau^4(d) \ll \sqrt{x}\cdot\log^{15}x$ \cite{Wilson} (also see \cite{Luca}), we gather that
\begin{align*}
J(x) &= \left(\sum_{1 \leq d < \sqrt{x}}2\psi(d)\sum_{j=1}^{\phi(b)/2}\pi \frac{\eta_{r(j,d)}(bd)}{4b^2 d^2}(x-d^2)\right) + O(x^{5/6}\cdot\log^{19}x)\\
&= \frac{\pi}{2 b^2}\cdot\left(\sum_{1 \leq d < \sqrt{x}, \ (d,b)=1}\psi(d)\sum_{j=1}^{\phi(b)/2} \frac{\eta_{r(j,d)}(bd)}{d^2}(x-d^2)\right) + O(x^{5/6}\cdot\log^{19}x).
\end{align*}
Noting the $\eta_{s}(\cdot)$ is multiplicative for each $s \in \mathbb{Z}$, we write
\begin{align*}
J(x) =\frac{\pi}{2 b^2}\cdot\left(\sum_{1 \leq d < \sqrt{x}}\psi(d)\sum_{j=1, \ \psi(j-a) = 1}^{b} \frac{\eta_{j}(b)\eta_{a}(d)}{d^2}(x-d^2)\right) + O(x^{5/6}\cdot\log^{19}x).
\end{align*}
Thus we have
\begin{align}
\label{Jbaequation}
J(x) = \frac{\pi}{2b^2}\cdot\left(\sum_{j=1, \ \psi(j-a) = 1}^{b}\eta_{j}(b)\sum_{1 \leq d < \sqrt{x}}\frac{\psi(d)\eta_{a}(d)}{d^2}(x-d^2)\right) + O(x^{5/6}\cdot\log^{19}(x)).
\end{align}
\subsection{Behaviour of $\eta_{a}(d)$} \text{ }
\text{ }

Put $\lambda_{a}(d):= \eta_{a}(d)/d$. With formulas (2.22), (2.23) and (2.24) of \cite{Blomer}, we record the following two lemmas.
\begin{lem}
\label{1mod4adv}
If $p \equiv 1 \pmod 4$ is a prime such that $\nu_{p}(a)> 0$ then
\[
\lambda_{a}(p^{j}) = 
\begin{cases}
1 + j(1-\frac{1}{p}) & 1 \leq j \leq \nu_{p}(a)\\
(1+\nu_{p}(a))(1-\frac{1}{p}) & j \geq \nu_{p}(a)+1.
\end{cases}
\]
\end{lem}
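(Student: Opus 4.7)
The plan is to exploit the hypothesis $p \equiv 1 \pmod 4$, which guarantees (by Hensel's lemma from the solution in $\F_p$) the existence of $\zeta \in \Z/p^j\Z$ with $\zeta^2 \equiv -1 \pmod{p^j}$. This yields the factorization $\alpha^2 + \beta^2 \equiv (\alpha + \zeta\beta)(\alpha - \zeta\beta) \pmod{p^j}$, and the linear change of variables $(\alpha,\beta) \mapsto (u,v) := (\alpha + \zeta\beta,\, \alpha - \zeta\beta)$ is a bijection on $(\Z/p^j\Z)^2$ since its determinant $-2\zeta$ is a unit modulo $p^j$. Therefore $\eta_{a}(p^j)$ equals the number of pairs $(u,v) \in (\Z/p^j\Z)^2$ with $uv \equiv a \pmod{p^j}$, which reduces the problem to a clean count stratified by $p$-adic valuations.

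Write $k := \nu_{p}(a) \geq 1$. In the regime $1 \leq j \leq k$ one has $a \equiv 0 \pmod{p^j}$, so I count pairs with $p^j \mid uv$. Stratifying by $s := \nu_{p}(u)$: when $s = j$ (i.e.\ $u \equiv 0$) there is $1$ choice of $u$ and $p^j$ choices of $v$; when $0 \leq s \leq j-1$ there are $\phi(p^{j-s}) = p^{j-s-1}(p-1)$ choices of $u$ and $p^{s}$ choices of $v$ satisfying $p^{j-s} \mid v$. Summing produces $\eta_{a}(p^j) = p^j + j(p-1)p^{j-1}$, hence $\lambda_{a}(p^j) = 1 + j(1 - 1/p)$.

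In the regime $j \geq k+1$, write $a = p^k a'$ with $\gcd(a',p) = 1$; then both $u$ and $v$ must be nonzero modulo $p^j$, and $uv \equiv a \pmod{p^j}$ forces $s + t = k$, where $s := \nu_{p}(u)$ and $t := \nu_{p}(v)$ (any other configuration yields $\nu_{p}(uv \bmod p^j) = \min(s+t, j) \neq k$, since $k < j$). For each admissible pair $(s, k-s)$ with $0 \leq s \leq k$, there are $p^{j-s-1}(p-1)$ choices of $u = p^{s} u'$ with $u'$ a unit; then $u' v \equiv p^{k-s} a' \pmod{p^{j-s}}$ determines $v \bmod p^{j-s}$, leaving $p^{s}$ lifts to $\Z/p^j\Z$ (each automatically of the correct valuation $k-s$, since $j - s > k - s$). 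The product $p^{j-s-1}(p-1)\cdot p^{s} = p^{j-1}(p-1)$ is independent of $s$, so summing over $s = 0, 1, \dots, k$ yields $\eta_{a}(p^j) = (k+1)(p-1)p^{j-1}$, i.e.\ $\lambda_{a}(p^j) = (1+\nu_{p}(a))(1 - 1/p)$.

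The hard part will be nothing more than the valuation bookkeeping in the second case; once the bijection converting the quadratic condition $\alpha^2 + \beta^2 \equiv a$ into the multiplicative condition $uv \equiv a$ is in place, everything reduces to a direct combinatorial count. As an alternative route, one could simply quote formulas (2.22)--(2.24) of \cite{Blomer}, which tabulate precisely these local densities for the form $x^2 + y^2$.
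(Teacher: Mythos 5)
Your proof is correct, but it takes a genuinely different route from the paper: the paper does not prove Lemma \ref{1mod4adv} at all, it simply records the formula by quoting (2.22)--(2.24) of \cite{Blomer} (the option you mention in your last sentence). Your argument is a self-contained elementary derivation: since $p\equiv 1\pmod 4$, Hensel's lemma supplies $\zeta$ with $\zeta^2\equiv -1\pmod{p^j}$, the unimodular change of variables $(\alpha,\beta)\mapsto(\alpha+\zeta\beta,\alpha-\zeta\beta)$ (determinant $-2\zeta$, a unit for odd $p$) converts the quadratic condition into the hyperbola count $\#\{(u,v): uv\equiv a\pmod{p^j}\}$, and the stratification by $\nu_p(u)$ is carried out correctly in both regimes: for $j\le \nu_p(a)$ you get $p^j + j\,p^{j-1}(p-1)$, and for $j\ge \nu_p(a)+1$ the constraint $\nu_p(u)+\nu_p(v)=\nu_p(a)$ gives $(\nu_p(a)+1)\,p^{j-1}(p-1)$, each matching the claimed values of $\lambda_a(p^j)=\eta_a(p^j)/p^j$. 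I checked the valuation bookkeeping in the second case (the $p^s$ lifts of the determined residue class mod $p^{j-s}$, each automatically of valuation $k-s$ since $k-s<j-s$) and it is sound. What your approach buys is independence from the external reference and transparency about where the hypothesis $p\equiv 1\pmod 4$ enters (the splitting of $x^2+y^2$ over $\mathbb{Z}/p^j\mathbb{Z}$); what the paper's approach buys is brevity and uniformity, since the same source also covers the companion cases in Lemmas \ref{3mod4adv} and \ref{coprimelem} and the bound \eqref{Twopower} at $p=2$, where no such splitting is available.
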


\begin{lem}
\label{3mod4adv}
If $p \equiv 3 \pmod 4$ is a prime such that $\nu_{p}(a)> 0$ then
\[
\lambda_{a}(p^{j}) = 
\begin{cases}
1/p & 1 \leq j \leq \nu_{p}(a), \ j \text{odd}\\
1 & 1 \leq j \leq \nu_{p}(a), \ j \text{ even}\\
1+1/p & j \geq \nu_{p}(a)+1, \ \nu_{p}(a) \text{even}\\
0 & j \geq \nu_{p}(a)+1, \ \nu_{p}(a) \text{ odd}.
\end{cases}
\]
\end{lem}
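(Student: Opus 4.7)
The plan is to compute $\eta_a(p^j)$ by direct counting of residues, using crucially that $-1$ is a non-residue modulo $p$ whenever $p \equiv 3 \pmod 4$. The cornerstone observation is the following: for any nonzero $(\alpha,\beta) \in \mathbb{Z}^2$, setting $r = \min\{\nu_p(\alpha), \nu_p(\beta)\}$, one has $\nu_p(\alpha^2+\beta^2) = 2r$; in particular this valuation is always even. Indeed, assuming WLOG $r = \nu_p(\alpha) \leq \nu_p(\beta)$ and writing $\alpha = p^r u$, $\beta = p^{r+s} w$ with $(u,p) = 1$ and $s \geq 0$, we get $\alpha^2 + \beta^2 = p^{2r}(u^2 + p^{2s} w^2)$; the bracketed quantity is either $\equiv u^2 \not\equiv 0 \pmod p$ when $s \geq 1$, or $u^2 + w^2$ with $(u,p) = (w,p) = 1$ when $s = 0$, which is nonzero mod $p$ because $-1$ is not a square there.

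First I would handle the range $1 \leq j \leq \nu_p(a)$. Here $p^j \mid a$, so the congruence $\alpha^2+\beta^2 \equiv a \pmod{p^j}$ reduces to $\alpha^2+\beta^2 \equiv 0 \pmod{p^j}$, which by the cornerstone is equivalent to $\min\{\nu_p(\alpha), \nu_p(\beta)\} \geq \lceil j/2 \rceil$. Writing $j = 2k$ or $j = 2k+1$ and counting multiples of $p^{\lceil j/2 \rceil}$ in $[1,p^j]$ yields $\eta_a(p^{2k}) = p^{2k}$ and $\eta_a(p^{2k+1}) = p^{2k}$, hence $\lambda_a(p^j) = 1$ for even $j$ in this range and $\lambda_a(p^j) = 1/p$ for odd $j$, matching the first two cases.

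Next I would tackle $j \geq \nu_p(a) + 1$. Set $v := \nu_p(a)$ and $a = p^v a'$ with $(a',p) = 1$. Since $j > v$, any solution must satisfy $\nu_p(\alpha^2+\beta^2) = v$ exactly, which the cornerstone renders impossible when $v$ is odd, giving $\lambda_a(p^j) = 0$. When $v = 2w$ is even, the constraint becomes $\min\{\nu_p(\alpha),\nu_p(\beta)\} = w$; substituting $\alpha = p^w U$, $\beta = p^w V$ reduces the congruence to $U^2 + V^2 \equiv a' \pmod{p^{j-2w}}$, and the condition $(U,V,p) = 1$ is automatic from $(a',p) = 1$. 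The valid pairs $(U,V) \in (\mathbb{Z}/p^{j-w}\mathbb{Z})^2$ form $p^{2w}$-fold lifts of solutions modulo $p^{j-2w}$, so $\eta_a(p^j) = p^{2w}\cdot\eta_{a'}(p^{j-2w})$. Standard Hensel lifting --- using that the gradient $(2U,2V)$ is nonzero mod $p$ under $(U,V,p)=1$ and $p$ odd --- yields $\eta_{a'}(p^n) = p^{n-1}\cdot\eta_{a'}(p)$; and the elementary evaluation $\eta_{a'}(p) = p + \sum_{U \pmod p}\chi(a'-U^2) = p - \chi(-1) = p + 1$ (with $\chi$ the Legendre symbol mod $p$, using $\chi(-1) = -1$ since $p \equiv 3 \pmod 4$) completes the count: $\eta_a(p^j) = (p+1)p^{j-1}$ and $\lambda_a(p^j) = 1 + 1/p$.

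The main obstacle is the combinatorial bookkeeping in the $j > \nu_p(a)$ case: the auxiliary variables $U,V$ live naturally modulo $p^{j-w}$, while the congruence they must satisfy has the smaller modulus $p^{j-2w}$, so the lift multiplicity (exactly $p^w$ per coordinate, giving $p^{2w}$ overall) and the automatic coprimality $(U,V,p) = 1$ must be pinned down with care. Once these details are nailed down, the remaining ingredients --- Hensel lifting and one Legendre-symbol sum --- are routine.
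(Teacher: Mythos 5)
Your proof is correct. Note, however, that the paper does not actually prove this lemma at all: it simply records it (together with Lemma \ref{1mod4adv}) as a restatement of formulas (2.22)--(2.24) of Blomer--Br\"udern--Dietmann, so any self-contained derivation is necessarily a different route. Your argument is a clean elementary one: the key valuation identity $\nu_p(\alpha^2+\beta^2)=2\min\{\nu_p(\alpha),\nu_p(\beta)\}$ (valid for $p\equiv 3\pmod 4$ because $-1$ is a non-residue) immediately forces $\eta_a(p^j)=p^{2\lfloor j/2\rfloor}$ in the range $j\le \nu_p(a)$ and kills all solutions when $j>\nu_p(a)$ with $\nu_p(a)$ odd, while the remaining case reduces by the substitution $\alpha=p^w U$, $\beta=p^w V$ to the unit case, handled by Hensel lifting (legitimate since $p$ is odd and the gradient $(2U,2V)$ is nonzero mod $p$) and the character sum $\sum_{U}\chi(a'-U^2)=-\chi(-1)=1$. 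All the delicate points --- that representatives in $[1,p^j]$ are nonzero integers so the valuation identity applies, that the coprimality $(U,V,p)=1$ is automatic from $(a',p)=1$, and that each solution modulo $p^{j-2w}$ lifts to exactly $p^{2w}$ pairs modulo $p^{j-w}$ --- are addressed. What your approach buys is independence from the cited source and transparency about where $p\equiv 3\pmod 4$ enters; what the paper's citation buys is brevity and uniform treatment alongside the $p\equiv 1\pmod 4$ case, which requires a genuinely different count (Gauss/Jacobi sum input rather than the vanishing of $\nu_p$-odd representations).
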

Furthermore, using (2.20) and (2.21) of \cite{Blomer} we have the following lemma.
\begin{lem}
\label{coprimelem}
Let $p$ be an odd prime such that $p \nmid a$, if $j$ is a positive integer we have
\begin{align*}
\lambda_{a}(p^{j}) = 1 - \chi_{4}(p)/p.
\end{align*}
\end{lem}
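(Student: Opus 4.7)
The plan is to compute $\eta_a(p^j)$ in two stages: first handle the base case $j=1$ via a standard Jacobsthal-type character sum, and then lift solutions to modulus $p^j$ using Hensel's lemma.

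For the base case, parametrise by $\beta$ and count the $\alpha$:
\begin{align*}
\eta_a(p) = \sum_{\beta = 0}^{p-1} \#\{0 \leq \alpha \leq p-1 : \alpha^2 \equiv a-\beta^2 \pmod p\} = \sum_{\beta = 0}^{p-1}\bigl(1 + \chi_2(a - \beta^2)\bigr),
\end{align*}
where $\chi_2$ denotes the Legendre symbol modulo $p$ with the convention $\chi_2(0) = 0$. Since the discriminant of $-\beta^2 + a$ in $\beta$ equals $4a \not\equiv 0 \pmod p$, the standard formula for character sums of nondegenerate quadratic polynomials over $\mathbb{F}_p$ gives $\sum_\beta \chi_2(a - \beta^2) = -\chi_2(-1)$. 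For odd $p$ one has $\chi_2(-1) = \chi_4(p)$, so $\eta_a(p) = p - \chi_4(p)$, which confirms the claim when $j = 1$.

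For $j \geq 2$, suppose $(\alpha_0, \beta_0)$ is a solution modulo $p$. Since $p \nmid a$, we cannot have $(\alpha_0, \beta_0) \equiv (0,0) \pmod p$, so without loss of generality $\beta_0 \not\equiv 0 \pmod p$. For each of the $p^{j-1}$ lifts $\widetilde\alpha \in \mathbb{Z}/p^j\mathbb{Z}$ of $\alpha_0$, apply Hensel's lemma to $g(y) = y^2 + \widetilde\alpha^2 - a$ at the simple root $y = \beta_0$ (whose derivative $g'(\beta_0) = 2\beta_0 \not\equiv 0 \pmod p$, since $p$ is odd) to obtain a unique $\widetilde\beta \equiv \beta_0 \pmod p$ with $\widetilde\alpha^2 + \widetilde\beta^2 \equiv a \pmod{p^j}$. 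Thus each mod-$p$ solution lifts to exactly $p^{j-1}$ solutions mod $p^j$, giving $\eta_a(p^j) = p^{j-1}(p - \chi_4(p))$ and hence $\lambda_a(p^j) = 1 - \chi_4(p)/p$.

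No serious obstacle arises; the proof is routine. The points requiring a little care are the identification $\chi_2(-1) = \chi_4(p)$ for odd $p$, ensuring the sign of the Jacobsthal sum is correct, and symmetry in the Hensel step when it is $\alpha_0$ rather than $\beta_0$ that is invertible modulo $p$. The same formula can alternatively be read off from the $p \nmid a$ specialisation of equations (2.20) and (2.21) of Blomer, Br\"udern \& Dietmann.
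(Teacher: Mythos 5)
Your proof is correct, but it is genuinely different from what the paper does: the paper offers no argument at all for this lemma, merely reading the formula off from equations (2.20) and (2.21) of Blomer, Br\"udern and Dietmann, which is exactly the alternative you mention in your last sentence. Your self-contained computation checks out. The base case is the standard evaluation $\sum_{\beta \bmod p}\chi_2(a-\beta^2) = -\chi_2(-1)$ for a nondegenerate quadratic (nondegenerate here because $p \nmid 4a$), together with $\chi_2(-1) = (-1)^{(p-1)/2} = \chi_4(p)$, giving $\eta_a(p) = p-\chi_4(p)$. The lifting step is also sound: $p \nmid a$ rules out $(\alpha_0,\beta_0)\equiv(0,0)\pmod{p}$, so at least one of the partial derivatives $2\alpha_0$, $2\beta_0$ is a unit (this is where oddness of $p$ enters), and Hensel's lemma shows each solution modulo $p$ has exactly $p^{j-1}$ lifts modulo $p^j$; hence $\eta_a(p^j) = p^{j-1}(p-\chi_4(p))$ and $\lambda_a(p^j) = 1-\chi_4(p)/p$, constant in $j$. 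What the paper's citation buys is uniformity, since the same source also supplies the more delicate $p\mid a$ cases recorded in Lemmas \ref{1mod4adv} and \ref{3mod4adv}, where the conic $\alpha^2+\beta^2=a$ is singular modulo $p$ and a direct count is messier; what your argument buys is independence from the reference and a transparent structural reason for the $j$-independence in the coprime case, namely smoothness of the affine conic over $\mathbb{F}_p$.
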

Following the proof of \cite[Lemma 2.8]{Blomer}, we also see that when $j \geq 1$ we have
\begin{align}
\label{Twopower}
0\leq\lambda_{a}(2^{j})\leq 4
\end{align}

Let $\overline{\lambda_{a}}$ be the convolution of $\lambda_{a}$ with the M\"obius function $\mu$, that is $\overline{\lambda_{a}} = \lambda_{a}*\mu$. We have that
\begin{align*}
\lambda_{a}(n) = \sum_{d|n}\overline{\lambda_{a}}(d).
\end{align*}
\begin{lem}
\label{lambdaconvbound}
We have
\begin{align*}
\overline{\lambda_{a}}(n) = \frac{f_{\lambda_{a}}(n)}{n},
\end{align*}
where $|f_{\lambda_{a}}(z)| \leq C_{a}$ whenever $z$ is an odd number and $C_{a}>0$ is some positive constant.
\end{lem}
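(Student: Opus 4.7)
\medskip

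\textbf{Plan for the proof of Lemma \ref{lambdaconvbound}.} The plan is to exploit multiplicativity and reduce the claim to a prime-by-prime calculation using Lemmas \ref{1mod4adv}, \ref{3mod4adv} and \ref{coprimelem}. Since the paper notes that $\eta_a(\cdot)$ is multiplicative, the function $\lambda_a(d)=\eta_a(d)/d$ is multiplicative, and hence so is the Dirichlet convolution $\overline{\lambda_a}=\lambda_a*\mu$. In particular, for every prime $p$ and integer $j\geq 1$,
\begin{equation*}
\overline{\lambda_a}(p^j)=\lambda_a(p^j)-\lambda_a(p^{j-1}),
\end{equation*}
and for any $n=\prod_{p}p^{j_p}$ we have $n\,\overline{\lambda_a}(n)=\prod_{p\mid n}p^{j_p}\overline{\lambda_a}(p^{j_p})$. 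Define $f_{\lambda_a}(n):=n\,\overline{\lambda_a}(n)$; our task is thus to control each local factor $p^{j_p}\overline{\lambda_a}(p^{j_p})$ at odd primes $p$.

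First I would handle the generic case $p\nmid a$ with $p$ odd, which is responsible for the behaviour of $f_{\lambda_a}$ at almost all primes. Lemma \ref{coprimelem} gives $\lambda_a(p^j)=1-\chi_4(p)/p$ for all $j\geq 1$, so
\begin{equation*}
\overline{\lambda_a}(p)=-\chi_4(p)/p\quad\text{and}\quad\overline{\lambda_a}(p^j)=0\ \text{for}\ j\geq 2.
\end{equation*}
Consequently $|p^{j}\overline{\lambda_a}(p^{j})|\leq 1$ uniformly in such $p$ and $j$, and the nonzero contribution comes only from $j=1$. Hence the portion of the product over primes $p\mid n$ with $p\nmid a$ is bounded by $1$.

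Next I would treat the (finitely many) primes $p\mid a$ with $p$ odd, splitting according to $p\bmod 4$. Using Lemma \ref{1mod4adv} for $p\equiv 1\pmod 4$, the stepwise differences $\lambda_a(p^j)-\lambda_a(p^{j-1})$ produce the constant value $1-1/p$ for $1\leq j\leq\nu_p(a)$, the value $-1/p$ at $j=\nu_p(a)+1$, and $0$ for $j\geq \nu_p(a)+2$. Lemma \ref{3mod4adv} gives an entirely analogous, case-by-case description for $p\equiv 3\pmod 4$. The crucial observation is that in \emph{every} case $\overline{\lambda_a}(p^j)=0$ once $j\geq \nu_p(a)+2$, so any nonzero local factor satisfies
\begin{equation*}
\bigl|p^{j}\overline{\lambda_a}(p^{j})\bigr|\leq p^{\nu_p(a)+1}\cdot\max_{j}|\overline{\lambda_a}(p^j)|\leq K\,p^{\nu_p(a)+1},
\end{equation*}
for some absolute constant $K$ (the values of $\lambda_a$ that appear in the three lemmas are all $O(1)$). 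In particular this local factor is bounded by a quantity depending only on $p$ and $\nu_p(a)$.

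Combining the two ranges using multiplicativity, for odd $n$ we obtain
\begin{equation*}
|f_{\lambda_a}(n)|=\prod_{\substack{p\mid n\\ p\nmid a}}\bigl|p^{j_p}\overline{\lambda_a}(p^{j_p})\bigr|\cdot\prod_{\substack{p\mid n\\ p\mid a}}\bigl|p^{j_p}\overline{\lambda_a}(p^{j_p})\bigr|\leq 1\cdot\prod_{\substack{p\mid a\\ p\ \mathrm{odd}}}K\,p^{\nu_p(a)+1}=:C_a,
\end{equation*}
which depends only on $a$ because only finitely many primes divide $a$. The main technical care lies in the bookkeeping of Step 2: one must check that the value $\lambda_a(p^{\nu_p(a)+1})$ predicted by Lemma \ref{3mod4adv} agrees with the ``coprime'' value $1-\chi_4(p)/p=1+1/p$ when $\nu_p(a)$ is even (it does), and that the cancellation $\overline{\lambda_a}(p^j)=0$ for large $j$ really holds in every sub-case—this is what prevents the local factor $p^{j}\overline{\lambda_a}(p^{j})$ from growing with $j$ and is the real content of the lemma.
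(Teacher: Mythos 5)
Your proposal is correct and follows essentially the same route as the paper: both arguments rest on the multiplicativity of $\overline{\lambda_{a}}=\lambda_{a}*\mu$, the vanishing $\overline{\lambda_{a}}(p^{j})=0$ for $j\geq\nu_{p}(a)+2$ at odd primes dividing $a$ (via Lemmas \ref{1mod4adv} and \ref{3mod4adv}), and the computation $\overline{\lambda_{a}}(p)=-\chi_{4}(p)/p$, $\overline{\lambda_{a}}(p^{j})=0$ for $j\geq 2$ at odd primes coprime to $a$ (via Lemma \ref{coprimelem}). The only difference is presentational: the paper bounds $|\overline{\lambda_{a}}(z)|$ globally by factoring $z=rq$ with $r\mid a^{2}$ and $(q,a)=1$, whereas you bound the local factors $p^{j}\overline{\lambda_{a}}(p^{j})$ directly, which amounts to the same estimate.
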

\begin{proof}
Note that $\overline{\lambda_{a}} = \lambda_{a}*\mu$ is a multiplicative function. If $p$ is an odd prime divisor of $a$, then by lemmas \ref{1mod4adv} and \ref{3mod4adv} we have that $\overline{\lambda_{a}}(p^{j}) = \lambda_{a}(p^{j}) - \lambda_{a}(p^{j-1}) = 0$ whenever $j \geq \nu_{p}(a)+2$. Put $D_{a} := \max_{d|a^2}\left|\overline{\lambda}_{a}(d)\right|$. Observe that if $p|a$ then $\nu_{p}(z) \leq \nu_{p}(a) + 1$ or else $\overline{\lambda_{a}}(z) = 0$. If $\overline{\lambda_{a}}(z) \neq 0$ we have
\begin{align*}
\overline{\lambda_{a}}(z) = \overline{\lambda_{a}}(r)\overline{\lambda_{a}}(q),
\end{align*}
where $z = rq$ with $r|a^2$ and $(a,q) = 1$, so that 
\begin{align}
\label{lambdabound}
|\overline{\lambda_{a}}(z)| \leq D_{a}|\overline{\lambda_{a}}(q)|.
\end{align}
If $p$ is a prime so that $(p,a) = 1$ then we compute with Lemma \ref{coprimelem} that $\overline{\lambda_{a}}(p) = \frac{-\chi_{4}(p)}{p}$, and $\overline{\lambda_{a}}(p^{j}) = 0$ for $j \geq 2$. Thus with inequality \eqref{lambdabound} we gather the estimate 
\begin{align*}
|\overline{\lambda_{a}}(z)| \leq D_{a}\frac{1}{q} \leq \frac{a^2D_{a}}{z}.
\end{align*}
Hence $|f_{\lambda_{a}}(z)| \leq a^2 D_{a}$ when $z$ is an odd positive number.
\end{proof}
Recall that $b$ is an even number, for a multiplicative character $\rho_{b}$ modulo $b$, we put
\begin{align*}
J(\rho_{b},b,a,x) = \sum_{n\leq x} \rho_{b}(n)\eta_{a}(n).
\end{align*}
\begin{lem}
\label{Twistedsumlem}
When $\rho_{b}$ is a non-trivial multiplicative character we have
\begin{align*}
J(\rho_{b},b,a,x) \ll x \log(x).
\end{align*}
\end{lem}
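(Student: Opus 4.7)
The strategy is to decouple $\eta_a$ from $\rho_b$ by a Dirichlet convolution and then to exploit cancellation from the non-trivial character. Since $\lambda_a = 1 \ast \overline{\lambda_a}$ by M\"obius inversion, we have
\begin{align*}
\eta_a(n) = n\lambda_a(n) = \sum_{d \mid n} d\,\overline{\lambda_a}(d)\cdot(n/d).
\end{align*}
Substituting into $J(\rho_b,b,a,x)$, writing $n = dm$, and using the complete multiplicativity of the Dirichlet character $\rho_b$, I would obtain
\begin{align*}
J(\rho_b,b,a,x) = \sum_{d \leq x} d\,\rho_b(d)\,\overline{\lambda_a}(d)\sum_{m \leq x/d} m\,\rho_b(m).
\end{align*}

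The next step is to bound the inner twisted sum. Since $\rho_b$ is non-principal modulo $b$, its sum over any complete period vanishes, so $T(y) := \sum_{m \leq y}\rho_b(m)$ satisfies $|T(y)| \leq \phi(b)$. Partial summation then gives $\sum_{m \leq y} m\,\rho_b(m) \ll_b y$, which reduces the problem to
\begin{align*}
|J(\rho_b,b,a,x)| \ll_b x\sum_{d \leq x} |\overline{\lambda_a}(d)|.
\end{align*}
For odd $d$, Lemma \ref{lambdaconvbound} yields $|\overline{\lambda_a}(d)| \leq C_a/d$, so the odd-$d$ contribution is $\ll_a \log x$ immediately. Factoring a general $d = 2^s d'$ with $d'$ odd and using multiplicativity of $\overline{\lambda_a}$ leaves $\bigl(\sum_{s\geq 0} |\overline{\lambda_a}(2^s)|\bigr)\cdot O_a(\log x)$ to control.

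The main obstacle is precisely this $2$-adic piece: the crude bound \eqref{Twopower} only gives $|\overline{\lambda_a}(2^s)| \leq 8$ for every $s$, which on its own would cost an extra factor of $\log x$. To remove it, I would argue that the sequence $\lambda_a(2^s)$ stabilizes once $s$ is large enough, forcing $\overline{\lambda_a}(2^s) = 0$ beyond some bound depending only on $a$. This stabilization follows from a Hensel-type lifting argument for $\alpha^2 + \beta^2 \equiv a \pmod{2^s}$: once $s$ exceeds an explicit function of $\nu_2(a)$, each solution modulo $2^s$ lifts to exactly $4$ solutions modulo $2^{s+1}$, so $\eta_a(2^{s+1}) = 4\,\eta_a(2^s)$ and hence $\lambda_a(2^{s+1}) = \lambda_a(2^s)$; alternatively, the same fact can be read off from the local density computations in Blomer, Br\"udern and Dietmann. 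Once $\sum_{s\geq 0} |\overline{\lambda_a}(2^s)| = O_a(1)$ is established, combining it with the odd-part bound gives $\sum_{d \leq x}|\overline{\lambda_a}(d)| \ll_a \log x$, and the claimed estimate $J(\rho_b,b,a,x) \ll x\log x$ follows.
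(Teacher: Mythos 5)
Your argument is correct and reaches the stated bound, but it is organised somewhat differently from the paper's. The paper fixes a reduced residue class $\alpha$ modulo $b$, evaluates $I(\alpha,b,a,x)=\sum_{n\le x,\ n\equiv\alpha\pmod b}\eta_a(n)$ via the same convolution $\lambda_a=\overline{\lambda_a}*1$, observes that the resulting main term is independent of $\alpha$, and then annihilates it with the orthogonality relation $\sum_{\alpha}\rho_b(\alpha)=0$, so that only the accumulated errors $\sum_{d}|f_{\lambda_a}(d)|\cdot O(x/d)\ll x\log x$ survive. You instead exploit the complete multiplicativity of $\rho_b$ to factor the double sum as $\sum_{d}d\rho_b(d)\overline{\lambda_a}(d)\sum_{m\le x/d}m\rho_b(m)$ and extract the cancellation from the inner sum via partial summation; this is a valid alternative and arguably more direct, since no main term needs to be isolated and cancelled. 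One remark: the entire ``main obstacle'' paragraph about the $2$-adic piece is superfluous. Since $b$ is even and $\rho_b$ is a character modulo $b$, we have $\rho_b(d)=0$ for every even $d$ (indeed for every $d$ with $(d,b)>1$), so only odd $d$ contribute to your outer sum and Lemma \ref{lambdaconvbound} applies directly, giving $\sum_{d\le x}|\rho_b(d)\overline{\lambda_a}(d)|\le C_a\sum_{d\le x}d^{-1}\ll_a\log x$. Your Hensel-lifting claim that $\lambda_a(2^s)$ stabilises is true but never needed; this is precisely why the paper, and Lemma \ref{lambdaconvbound}, only bother to control $\overline{\lambda_a}$ on odd arguments.
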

\begin{proof}
Let $1\leq \alpha \leq b$ be an integer with $(\alpha,b)=1$. We write 
\begin{align*}
I(\alpha,b,a,x)&:=\sum_{\substack{n \leq x \\ n \equiv \alpha \pmod b}}\hspace{-0.5cm}\eta_{a}(n) = \sum_{\substack{n \leq x \\ n \equiv \alpha \pmod b}}\hspace{-0.5cm}n \lambda_{a}(n)\\\\
&= \sum_{\substack{n \leq x \\ n \equiv \alpha \pmod b}}n \sum_{d|n}\overline{\lambda_{a}}(d).
\end{align*}
Using the notation and result of Lemma \ref{lambdaconvbound} we can continue the above equation as
\begin{align*}
&\sum_{\substack{n \leq x \\ n \equiv \alpha \pmod b}}\left( \sum_{md = n}m f_{\lambda_{a}}(d)\right)= \sum_{d \leq x, \ (d,b) = 1}f_{\lambda_{a}}(d)\sum_{\substack{md \leq x \\ md \equiv \alpha \pmod b}} m\\
&= \sum_{d \leq x, \ (d,b) = 1}f_{\lambda_{a}}(d)\left(\left(\sum_{\substack{md \leq x \\ md \equiv 0 \pmod b}} m\right) + O(x/d)\right)\\
&=\left(\sum_{d \leq x, \ (d,b) = 1}f_{\lambda_{a}}(d)\sum_{\substack{m \leq x/d \\ m \equiv 0 \pmod b}} m \right) + O(x\log(x)).
\end{align*}
Noting that
\begin{align*}
J(\rho_{b},b,a,x) = \sum_{\alpha = 1, \ (\alpha,b) = 1}^{b}\rho_{b}(\alpha)\cdot I(\alpha,b,a,x),
\end{align*}
we have
\begin{align*}
J(\rho_{b},b,a,x) &= \left(\sum_{\alpha = 1, \ (\alpha,b) = 1}^{b}\rho_{b}(\alpha)\right)\left(\sum_{d \leq x, \ (d,b) = 1}f_{\lambda_{a}}(d)\sum_{\substack{m \leq x/d \\ \ m \equiv 0 \pmod b}} m \right)\\
&\hspace{0.5cm}+ O(x\log(x))\\
&= O(x\log(x)).
\end{align*}
\par \vspace{-\baselineskip} \qedhere
\end{proof}
Let 
\begin{align*}
G(\rho_{b},b,a,s) = \sum_{n=1}^{\infty}\frac{\rho_{b}(n)\lambda_{a}(n)}{n^{s}}
\end{align*}
be a complex function in terms of $s$.
\begin{cor}
\label{analytic}
When $\rho_{b}$ is a non-trivial character $G(\rho_{b},b,a,s)$ is analytic on $\text{Re}(s) > 0$.
\end{cor}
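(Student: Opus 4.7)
The plan is to deduce the analyticity of $G(\rho_b, b, a, s)$ from the bound $J(\rho_b, b, a, x) \ll x \log x$ obtained in Lemma \ref{Twistedsumlem} via Abel summation. Since $\eta_a(n) = n \lambda_a(n)$, we have the identity
\begin{align*}
\frac{\rho_b(n) \lambda_a(n)}{n^s} = \frac{\rho_b(n) \eta_a(n)}{n^{s+1}},
\end{align*}
so partial summation applied to $a_n = \rho_b(n)\eta_a(n)$ with summation weight $u \mapsto u^{-(s+1)}$ will express the partial sums of $G$ in terms of $J(\rho_b, b, a, u)$.

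More precisely, I would carry out the following steps. First, by Abel summation,
\begin{align*}
\sum_{n \leq x} \frac{\rho_b(n) \lambda_a(n)}{n^s} = \frac{J(\rho_b, b, a, x)}{x^{s+1}} + (s+1) \int_1^x \frac{J(\rho_b, b, a, u)}{u^{s+2}}\, du.
\end{align*}
Second, substitute the bound $J(\rho_b, b, a, u) \ll u \log u$ from Lemma \ref{Twistedsumlem}. The boundary term is then majorised by $x \log x / x^{\mathrm{Re}(s)+1} = \log x / x^{\mathrm{Re}(s)}$, which tends to $0$ as $x \to \infty$ provided $\mathrm{Re}(s) > 0$. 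Third, the integrand is bounded in absolute value by $\log u / u^{\mathrm{Re}(s)+1}$, and
\begin{align*}
\int_1^\infty \frac{\log u}{u^{\mathrm{Re}(s)+1}}\, du < \infty \qquad \text{whenever } \mathrm{Re}(s) > 0.
\end{align*}
Consequently the limit as $x \to \infty$ exists and gives
\begin{align*}
G(\rho_b, b, a, s) = (s+1) \int_1^\infty \frac{J(\rho_b, b, a, u)}{u^{s+2}}\, du.
\end{align*}

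Finally, to upgrade pointwise convergence to analyticity, I would fix a compact subset $K \subset \{\mathrm{Re}(s) > 0\}$, choose $\sigma_0 > 0$ and $M > 0$ with $\mathrm{Re}(s) \geq \sigma_0$ and $|s+1| \leq M$ for $s \in K$, and observe that the integral representation converges uniformly on $K$ because
\begin{align*}
\left| (s+1) \int_N^\infty \frac{J(\rho_b, b, a, u)}{u^{s+2}}\, du \right| \ll M \int_N^\infty \frac{\log u}{u^{\sigma_0+1}}\, du \longrightarrow 0 \quad \text{as } N \to \infty.
\end{align*}
Since the integrand is holomorphic in $s$ on each finite strip and the convergence is uniform on compacta of the half-plane $\mathrm{Re}(s) > 0$, Weierstrass' theorem yields analyticity there. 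I do not expect a genuine obstacle here: the argument is a routine application of partial summation, and the substantive work is already packaged in Lemma \ref{Twistedsumlem}; the only care needed is the tail estimate used to justify the interchange of limit and integration for analyticity.
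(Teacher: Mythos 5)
Your proposal is correct and follows essentially the same route as the paper: both deduce analyticity from the bound $J(\rho_b,b,a,x) \ll x\log x$ of Lemma \ref{Twistedsumlem} via partial summation. The only difference is that you write out the integral representation and the uniform-convergence/Weierstrass argument explicitly, whereas the paper packages those standard steps into citations of Apostol's Theorems 11.8 and 11.12 on Dirichlet series.
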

\begin{proof}
Let $\delta>0$ be fixed, with Lemma \ref{Twistedsumlem} and summation by parts we have
\begin{align*}
\sum_{n \leq x}\frac{\rho_{b}(n)\lambda_{a}(n)}{n^{\delta}} = \sum_{n \leq x}\rho_{b}(n)\eta_{a}(n)n^{-1-\delta} \ll 1.
\end{align*}
In particular by \cite[Theorem 11.8]{Apostol}, the Dirichlet series $G(\rho_{b},b,a,s)$ converges for $\text{Re}(s)>0$. Using \cite[Theorem 11.12]{Apostol}, we verify that it is also analytic in the said domain.
\end{proof}
We need to show that $G(\rho_{b},b,a,1)$ does not vanish whenever $\rho_{b}$ is a non-trivial character modulo $b$. With the help of Lemmas \ref{1mod4adv} and \ref{3mod4adv} we see that for $s>1$, $G(\rho_{b},b,a,s)$ admits the Euler product of
\begin{align}
\label{Geulerproduct}
G(\rho_{b},b,a,s) = \prod_{p, \ p \text{ is an odd prime}}\left(1+\sum_{d=1}^{\infty}\frac{\rho_{b}(p^{d})\lambda_{a}(p^d)}{p^{ds}}\right).
\end{align}
Put
\begin{align*}
G_{p}(\rho_{b},b,a,s) := 1+\sum_{d=1}^{\infty}\frac{\rho_{b}(p^{d})\lambda_{a}(p^d)}{p^{ds}}.
\end{align*}
\begin{lem}
\label{Gpnonvanishing}
When $p$ is an odd prime and $\rho_{b}$ is a multiplicative character modulo $b$ then $G_{p}(\rho_{b},b,a,1) \neq 0$. Furthermore when $s\geq 1$ and $\rho_{b}$ is a real character then $G_{p}(\rho_{b},b,a,s) > 0$.
\end{lem}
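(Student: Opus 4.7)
The plan is to derive a closed-form rational expression for $G_p(\rho_b,b,a,s)$ in each of the three regimes distinguished by Lemmas \ref{1mod4adv}, \ref{3mod4adv}, and \ref{coprimelem}, and then to read off non-vanishing (and positivity, when $\rho_b$ is real) directly from that form. Throughout, I would write $y = \rho_b(p)/p^s$, so that $|y| \le 1/p \le 1/3$ for $s \ge 1$ (recall $p$ is an odd prime); in particular the geometric series in $y$ converge absolutely.

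First, if $p \nmid a$, Lemma \ref{coprimelem} gives $\lambda_a(p^j) = 1 - \chi_4(p)/p$ constant in $j \ge 1$, so a direct geometric sum yields
\begin{align*}
G_p(\rho_b,b,a,s) = 1 + \Big(1 - \tfrac{\chi_4(p)}{p}\Big)\frac{y}{1-y} = \frac{1 - \chi_4(p)\,y/p}{1 - y}.
\end{align*}
Next, if $p \equiv 1 \pmod 4$ and $p \mid a$, set $k = \nu_p(a)$ and apply Lemma \ref{1mod4adv}; splitting $\sum_{j \ge 0}\lambda_a(p^j)\,y^j$ at $j = k$ and simplifying with $\sum_{j=0}^{k} y^j = (1-y^{k+1})/(1-y)$ together with the truncated identity for $\sum_{j=1}^{k} j y^j$, I expect everything to telescope into
\begin{align*}
G_p(\rho_b,b,a,s) = \frac{(1 - y/p)(1 - y^{k+1})}{(1-y)^2}.
\end{align*}
Finally, if $p \equiv 3 \pmod 4$ and $p \mid a$, Lemma \ref{3mod4adv} says $\lambda_a(p^j)$ alternates between $1/p$ and $1$ for $1 \le j \le k$, with tail $1 + 1/p$ if $k$ is even and $0$ if $k$ is odd. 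Summing the alternating block and the (possibly zero) tail and multiplying through by $1 - y^2$ should collapse everything to
\begin{align*}
G_p(\rho_b,b,a,s) = \frac{(1 + y/p)(1 + \varepsilon\, y^{k+1})}{1 - y^2}, \qquad \varepsilon = \begin{cases} +1, & k \text{ even},\\ -1, & k \text{ odd}.\end{cases}
\end{align*}

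To conclude: in every case $G_p$ is a product/quotient of factors of the form $1 \pm y/p$, $1 \pm y$, $1 \pm y^2$, $1 \pm y^{k+1}$, or $1 - \chi_4(p)\,y/p$. For $|y| \le 1/p$ each such factor has modulus at least $1 - 1/p > 0$, so $G_p(\rho_b,b,a,1) \ne 0$ for every multiplicative character $\rho_b$. When $\rho_b$ is real, $y \in [-1/p, 1/p] \subset [-1/3, 1/3]$, and each factor becomes a strictly positive real number (the estimate $|y^{k+1}| \le 1/p^{k+1} < 1$ handles both signs of $\varepsilon$ uniformly), so $G_p(\rho_b,b,a,s) > 0$ for all $s \ge 1$. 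The main obstacle will be the algebraic bookkeeping in Case 2, where the linear-in-$j$ contribution of Lemma \ref{1mod4adv} forces the use of the derivative-type identity for $\sum_{j=1}^{k} j y^j$; the cancellation that collapses the expression into the clean product $(1-y/p)(1-y^{k+1})/(1-y)^2$ is the only delicate step, but once it is in hand the non-vanishing conclusion is immediate.
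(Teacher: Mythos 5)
Your proposal is correct, but it takes a genuinely different route from the paper. The paper's proof is a single uniform estimate: it majorizes $|\lambda_a(p^d)| \leq 1 + d(1-\tfrac{1}{p})$ in all three cases and applies the triangle inequality to get
\begin{align*}
|G_p(\rho_b,b,a,1)| > 1 - \sum_{d\geq 1}\frac{1+d(1-\frac{1}{p})}{p^d} = 1 - \frac{2}{p-1},
\end{align*}
which is nonnegative for every odd prime; at $p=3$ this is exactly $0$, so the paper's argument survives only because the majorization (and hence the displayed inequality) is strict, a point the paper leaves implicit. Your approach instead computes each Euler factor in closed form and factors it into terms of the shape $1 \pm y$, $1 \pm y/p$, $1\pm y^{k+1}$ with $|y|\leq 1/p$, each visibly of modulus at least $1-1/p>0$ and positive for real $y$. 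I checked your three formulas: the coprime case is immediate, and in the ramified cases the numerators do collapse to $(1-y/p)(1-y^{k+1})$ for $p\equiv 1\pmod 4$ and $(1+y/p)(1+\varepsilon y^{k+1})$ for $p\equiv 3 \pmod 4$ (verified directly for $k=1,2,3$ and in general via the derivative identity for $\sum_{j\leq k} j y^j$). Your route costs more algebra but buys two things: it avoids any reliance on the borderline case $p=3$, and the explicit Euler factors are reusable information (they make the positivity claim for real characters and $s\geq 1$ transparent, whereas the paper only asserts that it ``follows similarly''). Both proofs are valid; the paper's is shorter, yours is more informative.
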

\begin{proof}
Using Lemmas \ref{1mod4adv}, \ref{3mod4adv} and \ref{coprimelem} we bound
\begin{align*}
|G_{p}(\rho_{b},b,a,1)| &> 1 -\sum_{d=1}^{\infty}\frac{1+d(1-\frac{1}{p})}{p^d}\\
&= 1- \frac{1}{p-1} - (1-\frac{1}{p})\frac{p}{(p-1)^2}\\
&= 1-\frac{2}{p-1},
\end{align*}
so that $|G_{p}(\rho_{b},b,a,1)|>0$ for all odd primes. The proof for positivity of $G_{p}(\rho_{b},b,a,s)$ (for real characters) on the region $s\geq 1$ follows similarly.
\end{proof}
Put
\begin{align*}
G_{p}^{*}(\rho_{b},b,a,s) := \sum_{d=2}^{\infty}\frac{\rho_{b}(p^{d})\lambda_{a}(p^d)}{p^{ds}}.
\end{align*}
Using Lemmas \ref{1mod4adv}, \ref{3mod4adv} and \ref{coprimelem} we gather that when $s>1$ we have the estimate
\begin{align}
\label{Gperror1}
|G^{*}_{p}(\rho_{b},b,a,s)| < \sum_{d=2}^{\infty}\frac{d(1-\frac{1}{p})}{p^{ds}} \leq \frac{8}{p^2}.
\end{align}
Let $1_{b}$ be the trivial character modulo $b$. Using Lemmas \ref{1mod4adv}, \ref{3mod4adv} and \ref{coprimelem} we see that $G(1_{b},b,a,s)$ is analytic on $\text{Re}(s) > 1$.
\begin{lem}
\label{zetaestimate}
Whenever $s>1$, We have the estimate
\begin{align*}
0<G(1_{b},b,a,s) \leq K_{a}\cdot\zeta(s),
\end{align*}
for some constant $K_{a}>0$. 
\end{lem}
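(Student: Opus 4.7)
The plan is to exploit the Euler product \eqref{Geulerproduct} applied with the principal character $\rho_b = 1_b$. Since $b$ is even, $1_b$ vanishes on all powers of $2$ and on powers of any prime dividing $b$, so the relevant Euler product reduces to
\begin{align*}
G(1_b,b,a,s) = \prod_{\substack{p \text{ odd prime}\\ p \nmid b}} G_{p}(1_b,b,a,s).
\end{align*}
Positivity for $s > 1$ is immediate from Lemma \ref{Gpnonvanishing}, provided the product converges, which will follow from the estimates below. All the work therefore goes into the upper bound.

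I would split the primes appearing in the product into two disjoint groups: the finite set of odd primes $p \mid a$ with $p \nmid b$, and the remaining odd primes $p \nmid ab$. For $p \nmid ab$, Lemma \ref{coprimelem} yields the closed form
\begin{align*}
G_{p}(1_b,b,a,s) = 1 + \Bigl(1 - \tfrac{\chi_{4}(p)}{p}\Bigr)\sum_{d=1}^{\infty} p^{-ds} = 1 + \frac{1 - \chi_{4}(p)/p}{p^{s}-1}.
\end{align*}
Comparing with the local zeta factor $\zeta_{p}(s) = 1 + (p^{s}-1)^{-1}$, for $p \equiv 1\pmod 4$ one gets $G_{p} \leq \zeta_{p}(s)$ directly, while for $p \equiv 3\pmod 4$ a short calculation gives
\begin{align*}
\frac{G_{p}(1_b,b,a,s)}{\zeta_{p}(s)} = 1 + \frac{1}{p^{s+1}} \leq 1 + \frac{1}{p^{2}} \qquad (s>1),
\end{align*}
and $\prod_{p}(1 + p^{-2})$ converges.

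For the finitely many odd primes $p \mid a$, Lemmas \ref{1mod4adv} and \ref{3mod4adv} show that in every case $\lambda_{a}(p^{d}) \leq 1 + \nu_{p}(a)$ uniformly in $d \geq 1$ (the $p\equiv 3$ case is even sharper, bounded by $1 + 1/p \leq 2$). Hence
\begin{align*}
G_{p}(1_b,b,a,s) \leq 1 + \frac{1 + \nu_{p}(a)}{p^{s}-1} \leq (1 + \nu_{p}(a))\cdot \zeta_{p}(s).
\end{align*}
Multiplying the three types of local bounds together, and absorbing $\prod_{p\equiv 3(4)}(1+p^{-2})$ and the finite factor $\prod_{p\mid a}(1+\nu_{p}(a))$ into a single constant $K_{a}>0$, gives
\begin{align*}
G(1_b,b,a,s) \leq K_{a} \prod_{p \text{ odd}} \zeta_{p}(s) = K_{a}\cdot (1 - 2^{-s})\cdot\zeta(s) \leq K_{a}\cdot \zeta(s),
\end{align*}
which is the stated inequality.

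The only point requiring genuine care is the treatment of primes dividing $a$, since $\lambda_{a}(p^{d})$ there can grow like $\nu_{p}(a)$ rather than be close to $1$; the rescue is simply that there are only finitely many such primes, so they contribute a constant depending on $a$. Everything else is a direct comparison with the Euler factors of $\zeta(s)$ together with absolute convergence of the $p \equiv 3 \pmod 4$ correction product.
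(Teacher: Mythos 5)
Your proof is correct, but it takes a genuinely different route from the paper's. The paper's own argument is a one-line termwise comparison: it asserts that the multiplicative function $\lambda_{a}$ is bounded by some constant $K_{a}$ and then bounds the Dirichlet series $\sum_{n}1_{b}(n)\lambda_{a}(n)n^{-s}$ by $K_{a}\sum_{n}n^{-s}=K_{a}\zeta(s)$. You instead compare Euler factor by Euler factor with $\zeta(s)$ via \eqref{Geulerproduct}, separating the finitely many odd primes dividing $a$ from the rest and controlling the $p\equiv 3\pmod 4$ correction by the convergent product $\prod_{p}(1+p^{-2})$. Your route is in fact the safer one: by Lemma \ref{coprimelem}, $\lambda_{a}(p)=1+1/p$ for every prime $p\equiv 3\pmod 4$ with $p\nmid a$, so by multiplicativity $\lambda_{a}(n)$ contains the factor $\prod_{p\mid n,\ p\equiv 3\,(4)}(1+1/p)$, which is unbounded over squarefree $n$ built from such primes (it grows roughly like $\sqrt{\log\log n}$). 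The paper's boundedness claim for $\lambda_{a}$ is therefore not literally true, although the stated inequality $G(1_{b},b,a,s)\leq K_{a}\zeta(s)$ survives---precisely by the local comparison you carried out, which is what the intended application (showing $G(1_{b},b,a,s)\ll (s-1)^{-1}$ near $s=1$) actually requires. The only cosmetic point in your write-up is that the product $\prod_{p\ \mathrm{odd}}\zeta_{p}(s)$ in your last display ranges over more primes than occur in $G(1_{b},b,a,s)$ (it includes odd $p\mid b$), but since each $\zeta_{p}(s)\geq 1$ for $s>1$ this only strengthens the upper bound.
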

\begin{proof}
Using Lemmas \ref{1mod4adv}, \ref{3mod4adv}, \ref{coprimelem} and inequality \eqref{Twopower} we see that $\lambda_{a}(\cdot)$ is bounded, by say $K_{a}>0$, so that
\begin{align*}
G(1_{b},b,a,s) \leq K_{a}\cdot\zeta(s).
\end{align*}
Clearly $G(1_{b},b,a,s)$ is positive whenever $s>1$.
\end{proof}
Put
\begin{align*}
H(b,a,s) := \prod_{\rho_{b} \pmod b}G(\rho_{b},b,a,s),
\end{align*}
where the product is taken over all characters modulo $b$. Note that $H(b,a,s)$ is analytic for $\text{Re}(s)>1$. If $G(\rho_{b},b,a,1)$ vanishes for some character $\rho_{b}$, then using Lemma \ref{zetaestimate} and noting that $\zeta$ has a pole of order $1$ at $s = 1$, we see that $H(b,a,s)$ is bounded near $s=1$. With subsequent discussion we will show that this is not the case. Put
\begin{align*}
H_{p}(b,a,s) = \prod_{\rho_{b}\pmod b}G_{p}(\rho_{b},b,a,s).
\end{align*}
Note that when $s>0$ is real then $H_{p}(b,a,s)$ is real valued. We now expand $H_{p}(b,a,s)$ as follows:
\begin{align}
\label{Hpexpansion}
H_{p}(b,a,s) &= \prod_{\rho_{b}\pmod b}\left(1+\frac{\rho_{b}(p)\lambda_{a}(p)}{p^s} +G_{p}^{*}(\rho_{b},b,a,s)\right)\\
&= 1 +\frac{\theta_{b}(p)\lambda_{a}(p)}{p^s} + H^{*}_{p}(b,a,s),
\end{align}
where $\theta_{b}(p) = b$ if $p \equiv 1 \pmod b$ and $0$ otherwise. Using inequality \eqref{Gperror1}, we see that $H^{*}_{p}(b,a,s)$ is a real valued function for $s > 1$, satisfying 
\begin{align}
\label{Hperror}
H^{*}_{p}(b,a,s) \leq \frac{Q_{b}}{p^2}
\end{align}
 for some constant $Q_{b}>0$. Using Lemma \ref{Gpnonvanishing} we see that $H_{p}(b,a,1) \neq 0$. With Lemmas \ref{1mod4adv}, \ref{3mod4adv} and \ref{coprimelem} We can find a positive number $T_{b} \geq 3$ so that $|\frac{\theta_{b}(p)\lambda_{a}(p)}{p^s} + \frac{Q_{b}}{p^2}| \leq \frac{1}{2}$, whenever $p > T_{b}$.
\begin{lem}
\label{loglem}
If $|x| \leq \frac{1}{2}$ we have $\log(1+x) \geq x-2x^2$.
\end{lem}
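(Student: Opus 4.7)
The plan is to reduce this to a one-variable calculus exercise by setting $f(x) = \log(1+x) - x + 2x^2$ and showing $f(x) \geq 0$ on $[-1/2, 1/2]$. Since $f(0)=0$, it suffices to check that $f$ is non-increasing on $[-1/2, 0]$ and non-decreasing on $[0, 1/2]$, i.e.\ that $f'(x)$ has the same sign as $x$ throughout.

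The first step is to compute
\begin{align*}
f'(x) = \frac{1}{1+x} - 1 + 4x = \frac{1 - (1+x) + 4x(1+x)}{1+x} = \frac{x(3+4x)}{1+x}.
\end{align*}
The second step is to check the signs of the three factors on $[-1/2, 1/2]$. On this interval, $1 + x \geq 1/2 > 0$ and $3 + 4x \geq 3 - 2 = 1 > 0$, so both the denominator and the factor $3+4x$ are strictly positive. Consequently $\operatorname{sign}(f'(x)) = \operatorname{sign}(x)$, so $f$ attains its minimum on $[-1/2, 1/2]$ at $x = 0$, and that minimum equals $0$. This gives $f(x) \geq 0$, which is the claimed inequality.

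There is no real obstacle here; the only thing to be careful about is keeping track of the signs of $1+x$ and $3+4x$ at the left endpoint $x = -1/2$, where $3+4x$ comes closest to vanishing. An alternative route would be to use the Taylor expansion $\log(1+x) = x - x^2/2 + \sum_{k\geq 3}(-1)^{k+1}x^k/k$ and bound the tail by a geometric series, but the derivative approach above is cleaner and avoids splitting into the cases $x \geq 0$ and $x \leq 0$ for the tail estimate.
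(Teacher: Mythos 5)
Your proof is correct: the computation $f'(x) = \frac{x(3+4x)}{1+x}$ is right, and since $1+x \geq \tfrac12$ and $3+4x \geq 1$ on $[-\tfrac12,\tfrac12]$, the sign analysis and the conclusion $f \geq f(0) = 0$ follow. The paper states this lemma without any proof (treating it as a standard calculus fact), so there is nothing to compare against; your argument is the natural one and fills that gap completely.
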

We now write 
\begin{align*}
H(b,a,s) = \prod_{p \leq T_{b}}H_{p}(\rho_{b},b,a,s) \prod_{p > T_{b}}H_{p}(\rho_{b},b,a,s),
\end{align*}
where with Lemma \ref{Gpnonvanishing} we have
\begin{align*}
\prod_{p \leq T_{b}}H_{p}(\rho_{b},b,a,1) \neq 0.
\end{align*}
For $s > 1$, using equation \eqref{Hpexpansion} we evaluate
\begin{align*}
&\log(\prod_{p > T_{b}}H_{p}(\rho_{b},b,a,s)) = \sum_{p > T_{b}}\log\left(1 +\frac{\theta_{b}(p)\lambda_{a}(p)}{p^s} + H^{*}_{p}(b,a,s)\right)\\
&\geq \sum_{p > T_{b}} \frac{\theta_{b}(p)\lambda_{a}(p)}{p^s}+ H^{*}_{p}(b,a,s)-2\left(\frac{\theta_{b}(p)\lambda_{a}(p)}{p^s} + H^{*}_{p}(b,a,s)\right)^2\\
&\rightarrow \infty \text{ as }s\rightarrow 1^{+}.
\end{align*}
This shows that $\lim_{s \rightarrow 1^{+}}|H(b,a,s)|  = \infty$ so that $H(b,a,s)$ is not bounded near $s = 1$, and that $G(\rho_{b},b,a,1)$ is non-vanishing for all non-trivial characters $\rho_{b}$ modulo $b$. 
Recall
\begin{align}
\label{betaGpsiequality}
\beta(\psi,a) = \sum_{d= 1}^{\infty}\psi(d)\frac{\eta_{a}(d)}{d^2} = \sum_{d= 1}^{\infty}\psi(d)\frac{\lambda_{a}(d)}{d} = G(\psi,b,a,1) ,
\end{align}
so that by the explanation above it is necessary that
\begin{align}
\label{Betapsianeq0}
\beta(\psi,a) \neq 0.
\end{align}
With equation \ref{Geulerproduct}, on the ray $s>1$ we have
\begin{align*}
G(\psi,b,a,s) &= \prod_{p, \ p \text{ is an odd prime}}\left(1+\sum_{d=1}^{\infty}\frac{\psi(p^{d})\lambda_{a}(p^d)}{p^{ds}}\right)\\
&= \prod_{p, \ p \text{ is an odd prime}}G_{p}(\psi,b,a,s),
\end{align*}
and with Lemma \ref{Gpnonvanishing} we have that each entry in the infinite product above is positive. Hence $G(\psi,b,a,s)\geq 0$ when $s > 1$. By Corollary \ref{analytic} we gather that $G(\psi,b,a,1) \geq 0$. By \eqref{betaGpsiequality} and \eqref{Betapsianeq0} it is deduced that $\beta(\psi,a)>0$.

We now use these results in equation (\ref{Jbaequation}) to obtain an asymptotic formula, note that with Lemma \ref{Twistedsumlem} we have
\begin{align*}
J(x) &= \sum_{j=1, \ \psi(j-a) = 1}^{b}\frac{\pi\cdot \eta_{j}(b)}{2b^2}\sum_{1 \leq d < \sqrt{x}}\frac{\psi(d)\eta_{a}(d)}{d^2}(x-d^2) + O(x^{5/6}\cdot\log^{19}x)\\
&= \sum_{j=1, \ \psi(j-a) = 1}^{b}\frac{\pi\cdot \eta_{j}(b)}{2b^2}\sum_{1 \leq d < \sqrt{x}}\frac{\psi(d)\eta_{a}(d)}{d^2}(x)+O(x^{5/6}\cdot\log^{19}x).
\end{align*}
With Lemma \ref{Twistedsumlem} and summation by parts, we note that $\sum_{d \geq \sqrt{x}}\frac{\psi(d)\eta_{a}(d)}{d^2} \ll x^{-1/2+\varepsilon}$. Thus we gather that
\begin{align*}
J(x) = \left(\beta(\psi,a)\sum_{j=1, \ \psi(j-a) = 1}^{b}\frac{\pi \eta_{j}(b)}{2b^2}\right)x + O(x^{5/6}\cdot\log^{19}x),
\end{align*}
which finishes the proof.
\section{Proof of Theorem \ref{Largestgapstrianglesquare}}
Denote $\triangle^{*}$ to be set defined as
\begin{align*}
\triangle^{*}:= \{c^2+3d^2: \ c,d \in \mathbb{Z}\}.
\end{align*}
\begin{lem}
$\triangle^{*} \subseteq \triangle$.
\end{lem}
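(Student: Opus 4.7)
The plan is to prove the containment by an explicit algebraic identity: given any element $c^2+3d^2 \in \triangle^{*}$, I will exhibit integers $x,y$ so that $x^2+xy+y^2 = c^2+3d^2$. A natural candidate comes from recognising that the quadratic form $x^2+xy+y^2$ has discriminant $-3$, the same as $c^2+3d^2$ after scaling, so the two forms should be equivalent over $\mathbb{Z}$ up to a linear change of variables.

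Concretely, I will try the substitution $x = c-d$, $y = 2d$. Expanding,
\begin{align*}
(c-d)^2 + (c-d)(2d) + (2d)^2 &= c^2 - 2cd + d^2 + 2cd - 2d^2 + 4d^2\\
&= c^2 + 3d^2.
\end{align*}
Since $c,d \in \mathbb{Z}$ implies $c-d, 2d \in \mathbb{Z}$, this shows that every element of $\triangle^{*}$ is represented by $x^2+xy+y^2$ over the integers, hence lies in $\triangle$.

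There is essentially no obstacle here; the only point requiring care is choosing the correct linear change of variables. One could alternatively motivate this by working inside $\mathbb{Z}[\omega]$ with $\omega = \frac{1+\sqrt{-3}}{2}$: the element $c+d\sqrt{-3} = (c-d) + 2d\omega$ has norm $(c-d)^2 + (c-d)(2d) + (2d)^2$, which is exactly the identity above. Thus the inclusion $\triangle^{*} \subseteq \triangle$ is immediate from the fact that $\mathbb{Z}[\sqrt{-3}] \subseteq \mathbb{Z}[\omega]$ combined with the multiplicativity of the norm.
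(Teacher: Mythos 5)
Your proof is correct, and the identity checks out: $(c-d)^2+(c-d)(2d)+(2d)^2 = c^2+3d^2$, so every element of $\triangle^{*}$ is explicitly represented by $x^2+xy+y^2$. This is a genuinely different route from the paper. The paper does not exhibit a representation at all: it first characterises $\triangle$ by local conditions at primes, using Lemma \ref{r3equality} ($R_{2}(n) = 6\sum_{d\mid n}\chi_{3}(d)$) to conclude that $n\in\triangle$ if and only if $\nu_{p}(n)$ is even for every prime $p=2$ or $p\equiv 5 \pmod 6$, and then verifies via a mod $8$ computation and quadratic reciprocity that numbers of the form $c^2+3d^2$ satisfy the same local conditions. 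Your argument is shorter, entirely elementary, and avoids the reduction to the primitive case $(c_{*},d_{*})$ and the reciprocity step; it is essentially the observation that $\mathbb{Z}[\sqrt{-3}]$ sits inside $\mathbb{Z}[\omega]$ and norms restrict, as you note. The paper's route, while heavier, yields as a by-product the exact multiplicative description of $\triangle$, which is in the spirit of the rest of the section. One small caveat on your motivating remark: the two forms are not $\mathbb{Z}$-equivalent (their discriminants are $-3$ and $-12$, and your change of variables has determinant $2$), so the containment is genuinely one-directional; this does not affect the validity of your proof, which only needs the one inclusion.
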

\begin{proof}
We examine the prime factorization properties of numbers from $\triangle$. By Lemma \ref{r3equality}, note that $n \in \triangle$ if and only if, for every prime $p$ such that $p = 2$ or $p \equiv 5 \mod 6$, one has $\nu_p(n) \equiv 0 \bmod 2$ or $\nu_p(n) = \infty$ (to allow for $n = 0$). It is verified that elements of $\triangle^{*}$ have a similar factorization property. 

If $c,d \in \mathbb{Z}$ with $cd \neq 0$, then
\begin{align*}
\nu_{p}(c^2+3d^2) \equiv \nu_{p}(c_{*}^2 +3d_{*}^2) \bmod 2,
\end{align*}
where $c_{*} = \frac{c}{(c,d)}$ and $d_{*} = \frac{d}{(c,d)}$. Now we record the following:
\begin{itemize}
\item $\nu_{2}(c_{*}^2+3d_{*}^2) \in \{0,2\}$ (via modulo $8$ considerations).
\item If $p \equiv 5 \mod 6$ is a prime, then $\nu_{p}(c_{*}^2+3d_{*}^2) = 0$ (via quadratic reciprocity).
\end{itemize}
Hence the proof is done.
\end{proof}
Suppose that $a  = n^2 - 3m^2$ for some integers $n$ and $m$. Under this assumption, note that for each integer $s$, we have $s^2 +3m^2 \in \triangle$ and $s^2+3m^2+a = s^2+n^2 \in \BigSquare{2}$. Hence Theorem \ref{Largestgapstrianglesquare} holds in this case.

Moving to the more non-trivial situation, we choose integers $l_{1},l_{2}\in \{0,1\}$ so that $l_{1}^2 - 3l_{2}^2 - a \equiv 1 \mod 2$, and require integers $v$ and $d$ to satisfy $v \equiv l_{1} \mod 2$ and $d \equiv l_{2} \mod 2$.  If $(c,d,u,v)$ is an integer solution to 
\begin{align}
\label{trianglesumtwosquareeqn}
c^2 + 3d^2 + a = u^2 + v^2,
\end{align}
then
\begin{align*}
(c+u)(c-u) = v^2-3d^2-a,
\end{align*}
implying that there exists a divisor $g|(v^2-3d^2-a)$ and a simultaneous system of equations satisfying
\begin{align*}
c+u &= g \\
c-u &= \frac{v^2-3d^2-a}{g}.
\end{align*}
Thus the solutions $(c,d,u,v)$ to the equation \eqref{trianglesumtwosquareeqn} come from the set
\begin{align*}
\left\{\left(\frac{1}{2}\left(g+\frac{v^2-3d^2-a}{g}\right), \ d, \ \frac{1}{2}\left(g-\frac{v^2-3d^2-a}{g}\right), \ v\right), \ g,v,d \in \mathbb{Z}, \ g \neq 0\right\}.
\end{align*}
By a trivial substitution, we obtain a parametric family to the equation \eqref{trianglesumtwosquareeqn} given by
\begin{align*}
(c,d,u,v) = \left(\frac{v^2-3d^2-a+1}{2}, d, \frac{-v^2+3d^2+a+1}{2}, v \right).
\end{align*}
It is deduced that 
\begin{align}
\label{parametricfamilytrianglesquarea}
\left(\frac{v^2-3d^2-a+1}{2}\right)^2 +3d^2 \in S(\triangle, \BigSquare{2},a).
\end{align}
Let
\begin{align*}
f(v,d) = \left(\frac{v^2-3d^2-a+1}{2}\right)^2 +3d^2
\end{align*}
be a complex valued function. Define $Q(x)$ to be the smallest integer $d$, $d \equiv l_{2} \mod 2$, such that $f(0,d) > x$. Put $Q^{*}(x) = Q(x)+2$. It is gathered that
\begin{align}
\label{SizeofQtrianglesquare}
x^{1/4}\ll Q^{*}(x) \ll x^{1/4}.
\end{align}
Put
\begin{align*}
E(x) = f(0,Q^{*}(x)) - x.
\end{align*}
By considering the gaps between adjacent values of a quartic polynomial we can show that 
\begin{align}
\label{Errorsizetrianglesquare}
x^{3/4}\ll E(x) \ll x^{3/4}.
\end{align}
So that for $x \gg 1$ we have
\begin{align}
\label{LowerboundDisctriangsquare}
(3\cdot Q^{*}(x)^2 + a - 1)^2-4\cdot E(x) \geq 0.
\end{align}
For a given $x \geq 1$ let
\begin{align*}
g_{x}(w) &:= \left(w+\frac{-3\cdot Q^{*}(x)^2-a+1}{2}\right)^2 +3\cdot Q^{*}(x)^2\\
&= w^2 -\left(-3\cdot Q^{*}(x)^2-a+1\right)w + f(0,Q^{*}(x)),
\end{align*}
so that $g_{x}(\frac{v^2}{2}) = f(v,Q^{*}(x))$. In the variable $w \in \mathbb{C}$ we can solve for the equation
\begin{align*}
g_{x}(w) = x,
\end{align*}
giving us solutions
\begin{align*}
w = \frac{3\cdot Q^{*}(x)^2-a+1 \pm \sqrt{(3\cdot Q^{*}(x)^2-a+1)^2 - 4(f(0,Q^{*}(x))-x)}}{2}.
\end{align*}
For $x$ sufficiently large, so as to satisfy inequality \eqref{LowerboundDisctriangsquare} and $3\cdot Q^{*}(x)^2-a+1 \geq 1$ (the second inequality occurs for large $x$ due to inequality \eqref{SizeofQtrianglesquare}) let
\begin{align*}
w_{*}(x) = \frac{3\cdot Q^{*}(x)^2-a+1 - \sqrt{(3\cdot Q^{*}(x)^2-a+1)^2 - 4\cdot E(x)}}{2}.
\end{align*}
We have 
\begin{align*}
g_{x}(w_{*}(x)) = x,
\end{align*}
and 
\begin{align*}
f\left(\sqrt{2w_{*}(x)},Q^{*}(x)\right) = x.
\end{align*}
We now write
\begin{align*}
w_{*}(x) = \frac{1}{2}\cdot\frac{4\cdot E(x)}{3\cdot Q^{*}(x)^2-a+1+\sqrt{(3\cdot Q^{*}(x)^2-a+1)^2 - 4\cdot E(x)}},
\end{align*}
so that with inequalities \eqref{SizeofQtrianglesquare} and \eqref{Errorsizetrianglesquare} we have 
\begin{align*}
x^{1/4}\ll w_{*}(x) \ll x^{1/4}
\end{align*}
and
\begin{align}
\label{w*xsqrtsandwichinequality}
x^{1/8}\ll\sqrt{2w_{*}(x)} \ll x^{1/8}.
\end{align}
For $x$ large so that $\sqrt{2w_{*}(x)} \geq 3$, let $v_{*}(x)$ be the largest number less than $\sqrt{2w_{*}(x)}$ which satisfies $v_{*}(x) \equiv l_{1} \mod 2$. Note that with inequality \eqref{w*xsqrtsandwichinequality} we have
\begin{align}
\label{v*xsqrtsandwichinequality}
x^{1/8}\ll v_{*}(x) \ll x^{1/8}.
\end{align}
With the element-set relation \eqref{parametricfamilytrianglesquarea} we know that
\begin{align}
\label{elementrelationtrianglesquareapproximation}
\left(\frac{v_{*}(x)^2-3\cdot Q^{*}(x)^2-a+1}{2}\right)^2 +3\cdot Q^{*}(x)^2 \in S(\triangle, \BigSquare{2},a).
\end{align}
It remains to be shown that the quantity mentioned above is ``slightly more'' than $x$. Let $h_{x}(y) := f(y,Q^{*}(x))$ be a function in the variable $y \in \mathbb{R}$.
\begin{lem}
Let $x \gg 1$. In the interval $[0,Q^{*}(x))$, $h_{x}(\cdot)$ is a decreasing function.
\end{lem}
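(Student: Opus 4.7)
The plan is to prove this by a direct computation of the derivative of $h_x$ and checking its sign on the relevant interval. Expanding the definition we have
\begin{align*}
h_{x}(y) = \left(\frac{y^{2}-3Q^{*}(x)^{2}-a+1}{2}\right)^{2} + 3Q^{*}(x)^{2},
\end{align*}
so that $h_{x}$ is a polynomial in $y$ (with $Q^{*}(x)$ held fixed), and differentiating gives
\begin{align*}
h_{x}'(y) = y\cdot\bigl(y^{2}-3Q^{*}(x)^{2}-a+1\bigr).
\end{align*}

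The goal is therefore to show that the second factor is $\leq 0$ on the interval $[0,Q^{*}(x))$. For $y$ in this range one has $y^{2} < Q^{*}(x)^{2}$, so
\begin{align*}
y^{2}-3Q^{*}(x)^{2}-a+1 < -2Q^{*}(x)^{2}-a+1.
\end{align*}
By the lower bound $Q^{*}(x)\gg x^{1/4}$ from \eqref{SizeofQtrianglesquare}, the quantity $2Q^{*}(x)^{2}$ dominates the constant $a-1$ as soon as $x$ is sufficiently large (depending only on $a$), so the right-hand side is negative. Combined with $y\geq 0$, this gives $h_{x}'(y)\leq 0$ on $[0,Q^{*}(x))$, with strict inequality for $y>0$, which is the desired monotonicity.

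The main (and essentially only) step is recognising that we have enough room between $y^{2}$ and $3Q^{*}(x)^{2}$ to absorb the constant $-a+1$; everything else is algebra. There is no real obstacle, since we may freely enlarge the implicit threshold ``$x\gg 1$'' in the statement to guarantee $Q^{*}(x)^{2}\geq |a|$ (say), and $a$ is a fixed non-zero integer throughout. Presenting the argument in this order — factor $h_{x}'(y)$, observe the sign of each factor, and invoke \eqref{SizeofQtrianglesquare} to control $a$ — gives a short, self-contained proof.
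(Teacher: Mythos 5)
Your proof is correct and follows essentially the same route as the paper: differentiate $h_x$, factor the derivative as $y$ times $\bigl(y^2-3Q^{*}(x)^2-a+1\bigr)$, and use $Q^{*}(x)\gg x^{1/4}$ to see the second factor is negative on $[0,Q^{*}(x))$ for $x$ large. (Incidentally, your derivative $h_x'(y)=y\,(y^{2}-3Q^{*}(x)^{2}-a+1)$ is the correct one; the paper's displayed formula carries a harmless extra factor of $2$, which does not affect the sign argument.)
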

\begin{proof}
Differentiating $h_{x}(y)$ in the variable $y$ we get
\begin{align}
\label{hxderivativeformula}
h_{x}'(y) = 4y\cdot\left(\frac{y^2-3\cdot Q^{*}(x)^2-a+1}{2} \right),
\end{align}
which is less than $0$ when $y \in [0,Q^{*}(x))$, provided $x \gg 1$.
\end{proof}
Let $x \gg 1$, we now evaluate
\begin{align}
\label{trianglesquarepositivity}
\left(\left(\frac{v_{*}(x)^2-3\cdot Q^{*}(x)^2-a+1}{2}\right)^2 +3\cdot Q^{*}(x)^2\right) - x = \hspace{3cm} \notag\\
=h_{x}(v_{*}(x)) - h_{x}\left(\sqrt{2w_{*}(x)}\right)\hspace{2cm} \notag\\
> 0,\hspace{6.5cm}
\end{align}
furthermore with the mean value theorem we evaluate
\begin{align}
\label{trianglesquaremeanvaluethm}
\left|\left(\left(\frac{v_{*}(x)^2-3\cdot Q^{*}(x)^2-a+1}{2}\right)^2 +3\cdot Q^{*}(x)^2\right) - x\right| = \hspace{3cm} \notag\\
= \left|h_{x}(v_{*}(x)) - h_{x}\left(\sqrt{2w_{*}(x)}\right)\right|\hspace{2.5cm}\notag\\
=\left|h_{x}'(z(x))\right|\left(\sqrt{2w_{*}(x)} - v_{*}(x) \right),\hspace{2cm}
\end{align}
where $z(x)$ is a quantity in the interval $\left[v_{*}(x), \sqrt{2w_{*}(x)}\right]$. With \eqref{trianglesquaremeanvaluethm}, \eqref{hxderivativeformula}, \eqref{SizeofQtrianglesquare}, \eqref{v*xsqrtsandwichinequality} and the inequality $\left|v_{*}(x) - \sqrt{2w_{*}(x)}\right| \leq 2$, we obtain 
\begin{align}
\label{trianglesquarefinaleqn}
\left|\left(\left(\frac{v_{*}(x)^2-3\cdot Q^{*}(x)^2-a+1}{2}\right)^2 +3\cdot Q^{*}(x)^2\right) - x\right| \ll x^{5/8}.
\end{align}
With inequalities \eqref{trianglesquarefinaleqn}, \eqref{trianglesquarepositivity}, and the element-set relation \eqref{elementrelationtrianglesquareapproximation}, the theorem follows.
\section{Acknowledgments}
\begin{itemize}
\item The author would like to thank Fernando Chamizo for highlighting difficulties using spectral theory to study the sum \eqref{Theasymptoticformula}. Should their ideas advance to understand interplay between distinct quadratic forms, the main result of this paper should be obsolete. 
\item The author would also like to thank Igor Shparlinski for their comments.
\item This research is made possible due to the Australian Government Research Training Program (RTP) Scholarship and the University of New South Wales for a top-up scholarship, both of which were instrumental in this work.
\end{itemize}
 
\end{document}